\def\d{d}
\def\la{\lambda}%
\def\a{\alpha}
\def\b{\beta}
\def\ga{\gamma}
\def\Ph{\Phi_{\la}}
\newcommand{\deriv}[3][]{\frac{\d^{#1}{#2}}{{\d{#3}}^{#1}}}
\def\w{w}
\def\v{v}
\def\ww#1{w(#1;t)}
\def\vvx{\deriv{\v}{x}(x;t)} 
\def\vvy{\deriv{\v}{y}(y;t)} 
\def\vx{v'(x;t)}
\def\vy{v'(y;t)}
\def\vvxy{\mathcal{K}(x,y)}
\def\W{\mathcal{W}}
\def\N{\mathbb{N}}
\def\R{\mathbb{R}}
\def\PI{\hbox{\rm P$_{\rm I}$}}
\def\PII{\hbox{\rm P$_{\rm II}$}}
\def\PIV{\hbox{\rm P$_{\rm IV}$}}
\def\sPIV{\hbox{\rm S$_{\rm IV}$}}
\def\dPI{\hbox{\rm dP$_{\rm I}$}}
\newcommand{\WhitD}[1]{D_{#1}}
\newtheorem{theorem}{Theorem}[section]
\newtheorem{proposition}[theorem]{Proposition}
\newtheorem{lemma}[theorem]{Lemma}
\newtheorem{corollary}[theorem]{Corollary}
\theoremstyle{definition}
\newtheorem{remark}[theorem]{Remark}
\newtheorem{remarks}[theorem]{Remarks}
\numberwithin{figure}{section}
\numberwithin{equation}{section}
\numberwithin{table}{section}
\def\ds{\displaystyle}
\def\erf{\mathop{\rm erf}\nolimits}
\def\erfc{\mathop{\rm erfc}\nolimits}
\numberwithin{figure}{section}
\numberwithin{equation}{section}
\numberwithin{table}{section}
\newcommand{\comment}[1]{}
\def\p{Painlev\'{e}}
\def\peqs{Painlev\'{e} equations}
\begin{document}
\title{A Generalized Freud Weight}
\author{Peter A.\ Clarkson\\ School of Mathematics, Statistics and Actuarial Science,\\
University of Kent, Canterbury, CT2 7NF, UK\\ 
\texttt{P.A.Clarkson@kent.ac.uk}\\[10pt]
Kerstin Jordaan 
and 
Abey Kelil\\ Department of Mathematics and Applied Mathematics,\\
University of Pretoria, Pretoria, 0002, South Africa\\ 
\texttt{kerstin.jordaan@up.ac.za}, \texttt{abeysh2001@gmail.com}}
\date{}

\maketitle

\begin{abstract}
We discuss the relationship between the recurrence coefficients of orthogonal
polynomials with respect to a generalized Freud weight 
\[\ww{x}=|x|^{2\la+1}\exp\left(-x^4+tx^2\right),\qquad x\in\R,\]
with parameters $\la>-1$ and $t\in\R$,
and classical solutions of the fourth \p\ equation. We show that the coefficients in these recurrence
relations can be expressed in terms of Wronskians of parabolic cylinder functions that
arise in the description of special function solutions of the fourth \p\ equation.
Further we derive a second-order linear ordinary differential equation and a differential-difference
equation satisfied by the generalized Freud polynomials.
\end{abstract}
\section{Introduction}
Let $\mu$ be a positive Borel measure defined on the real line for which all the moments
$$\mu_n=\int_{\R} x^n\, d\mu(x), \quad n\in\N,$$ exist.
The Hilbert space $L^2(\mu)$ contains the set of polynomials and hence Gram-Schmidt orthogonalization applied to the set $\{1,x,x^2,\dots\}$ yields a set of monic orthogonal polynomials on the real line defined by
\begin{equation*}
\int_{\R}  P_n(x) P_m(x) \,d\mu(x)=h_n\delta_{mn}, \qquad m,n\in\N,
\end{equation*}
with $h_n>0$, $\delta_{mn}$ the Kronecker delta and $P_n(x)$ is a polynomial of exact degree $n$.

A family $\{P_n(x)\}_{n=0}^{\infty}$ of monic orthogonal polynomials satisfies a three-term recurrence relation of the form \begin{equation}
\label{3trr1} xP_n(x) =  P_{n+1}(x) + \a_n P_n(x) + \b_n P_{n-1}(x), \quad n\in\N,
\end{equation} 
with $P_{-1}(x)=0$ and $P_0(x)=1$. 
\comment{Comparing the leading coefficients in \eqref{3trr1} gives $\ds{a_{n+1}={\gamma_n}/{\gamma_{n+1}}>0,}$ while computing the Fourier coefficients of $xP_n(x)$ yields}%
Respectively multiplying \eqref{3trr1} by $P_n(x)$ and $P_{n-1}(x)$ and integrating gives
\begin{align*}
\a_n=\frac{1}{h_n}\int_{\R}  x \,P_n^2(x)\,d\mu(x),\qquad
\b_n&=\frac{1}{h_{n-1}}\int_{\R}  x\,P_n(x)P_{n-1}(x)\,d\mu(x).
\end{align*}
The converse statement, known as the spectral theorem for orthogonal polynomials, is often attributed to Favard \cite{refFavard} but was seemingly discovered independently, around the same time, by both Shohat \cite{refShohat36,refShohat38} and Natanson \cite{refNatanson}. It is interesting to note that the result can be traced back to earlier work on continued fractions with a rudimentary form given already in 1894 by Stieltjes \cite{refStieltjes}, see \cite{refChihara}, also \cite{refMarcellan,refwalter}. The result also appears in the books by Stone \cite{refStone} and Wintner \cite{refWintner}, see \cite{refIsmail}. A modern proof of the result is given by Beardon \cite{refBeardon}.

\begin{theorem}
If a family of polynomials  satisfies a three-term recurrence relation of the form
\begin{equation*}
xP_n(x) = P_{n+1}(x) + \a_n P_n(x) + \b_n P_{n-1}(x),
\end{equation*}
with initial conditions $P_{-1}(x)=0$ and $P_0(x)=1$, where $\a_n,\b_n\in\R$, then there exists a measure $\mu$ on the real line such that these polynomials are monic orthogonal polynomials satisfying
\begin{equation*}
\int_{\R}  P_n(x) P_m(x)\,d\mu(x)=h_n\delta_{mn}, \qquad m,n \in\N, 
\end{equation*} with $h_n>0$.
\end{theorem}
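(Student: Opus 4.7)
My plan is the standard two-step strategy: first build a positive linear functional on polynomials that realises the prescribed orthogonality, then represent it as a Borel measure on $\R$. The final step is essentially the (determinate or indeterminate) Hamburger moment problem, equivalently the spectral theorem applied to the associated Jacobi operator.

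\textbf{Step 1: build the moment functional.} Since $\{P_n\}_{n\ge 0}$ is a graded basis of $\R[x]$ (because $P_n$ has exact degree $n$), I define a linear functional $\mathcal{L}\colon \R[x]\to\R$ uniquely by setting $\mathcal{L}[P_0]=\mathcal{L}[1]=h_0$ for some chosen $h_0>0$ and $\mathcal{L}[P_n]=0$ for all $n\ge 1$. This prescribes every moment $\mu_n=\mathcal{L}[x^n]$ recursively.

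\textbf{Step 2: verify formal orthogonality.} Using \eqref{3trr1} to rewrite $xP_n$ and induct on $k$, I would show that $\mathcal{L}[x^k P_n]=0$ for $0\le k<n$; consequently $\mathcal{L}[P_mP_n]=0$ whenever $m\ne n$. Multiplying \eqref{3trr1} by $P_{n-1}$ and applying $\mathcal{L}$ yields the relation $h_n=\b_n h_{n-1}$, so $h_n=h_0\,\b_1\b_2\cdots\b_n$, which is strictly positive provided $\b_j>0$ for all $j$ (the natural positivity hypothesis tacitly needed for $h_n>0$).

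\textbf{Step 3: positivity.} Any $p\in\R[x]$ has a unique expansion $p=\sum c_j P_j$, hence $\mathcal{L}[p^2]=\sum c_j^2 h_j\ge 0$ with equality only when $p\equiv 0$. So $\mathcal{L}$ is a strictly positive functional on $\R[x]$, and equivalently every Hankel matrix $(\mu_{i+j})_{i,j=0}^{N}$ (the Gram matrix of $\{1,x,\dots,x^N\}$ under $\mathcal{L}$) is positive definite.

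\textbf{Step 4: represent $\mathcal{L}$ by a measure.} I would now invoke the Hamburger moment theorem: a sequence of moments whose Hankel forms are all positive definite is the moment sequence of a positive Borel measure on $\R$. Equivalently, assemble the symmetric tridiagonal Jacobi operator
\[
J=\begin{pmatrix}\a_0 & \sqrt{\b_1} & & \\ \sqrt{\b_1} & \a_1 & \sqrt{\b_2} & \\ & \sqrt{\b_2} & \a_2 & \ddots\\ & & \ddots & \ddots\end{pmatrix}
\]
acting on $\ell^2(\N)$ with the finitely supported sequences as dense domain, pick any self-adjoint extension $\widetilde J$, and define $\mu(A)=h_0\,\langle e_0,E_{\widetilde J}(A)e_0\rangle$, where $E_{\widetilde J}$ is the spectral resolution of $\widetilde J$. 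By construction $P_n(\widetilde J)e_0=\sqrt{h_n/h_0}\,e_n$ (up to the normalisation of $e_n$), so the orthogonality of $\{e_n\}$ in $\ell^2(\N)$ transports to orthogonality of $\{P_n\}$ in $L^2(\mu)$ with the correct norms $h_n$.

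\textbf{Main obstacle.} The genuinely nontrivial content lies in Step 4: when the recurrence coefficients grow too fast the operator $J$ may fail to be essentially self-adjoint, so that the representing measure is not unique (the indeterminate Hamburger case); however, \emph{existence} of at least one such $\mu$ is always guaranteed. A small bookkeeping point, worth making explicit at the end, is that the polynomials produced by the recurrence agree with the monic orthogonal polynomials generated from the measure $\mu$ by Gram--Schmidt: this is immediate from uniqueness of monic orthogonalisation, but should be recorded to close the circle.
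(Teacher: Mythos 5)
The paper offers no proof of its own here: it simply cites Chihara and Ismail, and the argument you give is precisely the standard proof found in those references (positive moment functional plus the Hamburger moment problem, equivalently the spectral theorem for the associated Jacobi operator), so your proposal is correct and matches the intended argument. Your Steps 1--3 are sound: the functional $\mathcal{L}$ is well defined since $\{P_n\}$ is a graded basis, the induction giving $\mathcal{L}[x^kP_n]=0$ for $k<n$ goes through using the recurrence, and $h_n=\b_nh_{n-1}$ follows by pairing the recurrence with $P_{n-1}$. You are also right to flag that positivity of $\mathcal{L}$, and hence $h_n>0$, requires $\b_n>0$ for all $n$; the theorem as stated in the paper assumes only $\a_n,\b_n\in\R$, which is genuinely too weak for the stated conclusion (with $\b_n\le 0$ one obtains at best a quasi-definite or signed functional), so this hypothesis should indeed be made explicit rather than left tacit. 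Your closing remarks on Step 4 --- that a self-adjoint extension of $J$ always exists (equal deficiency indices for a real symmetric operator), so existence of $\mu$ is guaranteed even in the indeterminate case, and that the recurrence polynomials coincide with the monic Gram--Schmidt orthogonalisation for $\mu$ by uniqueness --- correctly identify and dispose of the only nontrivial points.
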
 
\begin{proof}See, for example, \cite{refChihara,refIsmail}.
\end{proof}
Two important problems arise in the study of orthogonal polynomials.

\begin{enumerate}
\item \textit{Direct problem:} Given a measure $\mu(x)$, what can be deduced about the recurrence coefficients $\{\a_n,\b_n\}$, $n\in\N$?
\item \textit{Inverse problem:} Given recurrence coefficients $\{\a_n,\b_n\}$, $n\in\N$, what can be deduced about the orthogonality measure $\mu(x)$?
\end{enumerate}

Given an orthogonality measure $\mu(x)$, several characteristic properties of the sequence $\{P_n(x)\}_{n=0}^{\infty}$ are determined by the nature of the measure. Extracting this information from the measure is an extension of the direct problem and one of the interesting and challenging problems in the study of systems of orthogonal polynomials. Properties typically studied include the Hankel determinants, the coefficients of the three-term recurrence relation, the coefficients of the differential-difference equation and the differential equation satisfied by the polynomials. For instance, the recurrence coefficients can be expressed in terms of Hankel determinants comprising the moments of the orthogonality measure. We refer to \cite{refChihara,refIsmail,refSzego} for further information about orthogonal polynomials.

For classical orthogonal polynomials, namely the Hermite, Laguerre and Jacobi polynomials, the measure $\mu(x)$ is absolutely continuous and can be expressed in terms of a weight function $w(x)$ which is non-negative, with support on some interval $[a,b] \in\R$ (where $a=-\infty$ and $b=\infty$ are allowed), i.e.\ $d\mu(x)=w(x)\,dx$.
The properties that these orthogonal polynomials satisfy are well known and include the fact that (cf.~\cite{refIsmail}):
\begin{itemize}
\item[(a)] their derivatives also form orthogonal polynomial sets;
\item[(b)] they satisfy a Rodrigues'  type formula
\[P_n(x)=\frac{1}{\kappa_n w(x)}\deriv[n]{}{x}\left\{w(x)\sigma^n(x)\right\},\]
where $w(x)$ is the weight function, $\sigma(x)$ is a polynomial in $x$ independent of $n$ and $\kappa_n$ a constant;
\item[(c)] they satisfy a non-linear equation of the form 
\[\begin{split}\sigma(x)\deriv{}{x}\big[{P_n(x)P_{n-1}(x)}\big]=(a_n x+b_n) &P_n(x) P_{n-1}(x)+c_n P_n^2(x) 
+d_n P_{n-1}^2 (x),\end{split}\]
where $a_n$, $b_n$, $c_n$ and $d_n$ are independent of $x$;
\item[(d)] they satisfy a second order linear differential equation of the Sturm-Liouville type
\[\sigma(x)\deriv[2]{P_n}{x}+\tau(x)\deriv{P_n}{x}=\la_nP_n(x),\] where
$\sigma(x)$ is a polynomial of degree $\leq2$, $\tau(x)$ is a linear polynomial, both independent of $n$, and $\la_n$ is independent of $x$;
\item[(e)] they satisfy the differential-difference equation
\begin{equation}\pi(x)\deriv{P_n}{x}=(a_n x+b_n)P_{n}(x)+c_nP_{n-1}(x).\label{d}\end{equation}

\end{itemize}

The converse, that any polynomial set which satisfies any one of the above properties must necessarily be one of the classical orthogonal polynomial sets, also holds.
In particular, Al-Salam and Chihara \cite{ASC} showed that orthogonal polynomial sets satisfying \eqref{d} must be either Hermite, Laguerre or Jacobi polynomials.

Askey raised the more general question of what orthogonal polynomial sets have the property that their derivatives satisfy a relation of the form
$$\pi(x)\deriv{P_n}{x}=\sum_{k=n-t}^{n+s}a_{n,k}P_k(x).$$ This problem was solved by Shohat \cite{refShohat39} and later, independently, by Freud \cite{refFreud}, as well as Bonan and Nevai \cite{refBNevai}. Maroni \cite{Maroni1,Maroni2} stated the problem in a different way, trying to find all orthogonal polynomial sets whose derivatives are quasi-orthogonal, and called such orthogonal polynomial sets semi-classical.

A useful characterization of classical orthogonal polynomials is the Pearson equation
\begin{equation}\label{eq:Pearson}\deriv{}{x}[\sigma(x)w(x)]=\tau(x)w(x),\end{equation}
where $\sigma(x)$ and $\tau(x)$ are polynomials with deg$(\sigma)\leq 2$ and deg$(\tau)=1$.

Semi-classical orthogonal polynomials are defined as orthogonal polynomials for which the weight function satisfies a
Pearson equation \eqref{eq:Pearson} with deg$(\sigma)\geq 2$ or deg$(\tau)\neq 1$; see Hendriksen and van Rossum \cite{Hendriksen}.

The relationship between semi-classical orthogonal polynomials and integrable equations dates back to Shohat \cite{refShohat39} and Freud \cite{refFreud}, see also Bonan and Nevai \cite{refBNevai}. However it was not until the work of Fokas, Its and Kitaev \cite{refFIKa,refFIKb} that these equations were identified as discrete \p\ equations. The relationship between semi-classical orthogonal polynomials and the \p\ equations was discussed by Magnus \cite{refMagnus95} who, for example, showed that the coefficients in the three-term recurrence relation for the Freud weight \cite{refFreud}
\begin{equation}\ww{x}=\exp\left(-x^4+tx^2\right),\qquad x\in\R,\label{Freud}\end{equation} with $t\in\R$ a parameter,
can be expressed in terms of simultaneous solutions of the discrete equation
\begin{equation}\label{eq:dPIf}
q_n(q_{n-1}+q_n+q_{n+1})+2tq_n=n,
\end{equation}
which is {discrete \PI}\ (\dPI), as shown by Bonan and Nevai \cite{refBNevai}, and the differential equation
\begin{equation}\label{eq:PIVf}
\deriv[2]{q_n}{z}= \frac{1}{2q_n}\left(\deriv{q_n}{z}\right)^{2} + \frac{3}{2}q_n^3 + 4z q_n^2 + 2(z^2 +\tfrac12n)q_n-  \frac{n^2}{2q_n},
\end{equation}
which is a special case of the fourth \p\ equation -- see equation \eqref{eq:PIV} below -- 
with $n\in\N$. This connection between the recurrence coefficients for the Freud weight \eqref{Freud} and simultaneous solutions of \eqref{eq:dPIf} and \eqref{eq:PIVf} is due to Kitaev, see \cite{refFIKa,refFIZ}. The discrete equation \eqref{eq:dPIf} is also known as the ``Freud equation" or the ``string equation".

It is known (cf.~\cite{refIsmail}) that polynomials orthogonal with respect to exponential weights $w(x)=\exp\{-Q(x)\}$ on $\R$ for polynomial $Q(x)$ satisfy structural relations of the form
$$\deriv{P_n}{x}(x) = A_n(x) P_{n-1}(x) + B_n(x) P_n(x).$$ Such structural relations and the three-term recurrence relation reveal that the orthogonal polynomials satisfy a second order differential equation. 

It had been generally accepted that explicit expressions for the associated coefficients in the three-term recurrence relation and orthogonal polynomials were nonexistent for weights such as the Freud weight \eqref{Freud}.
To quote from the {\it{Digital Library of Mathematical Functions}} \cite[\S18.32]{DLMF}:
\begin{quote}
``A {\it{Freud weight}} is a weight function of the form
$$ w(x)=\exp\{-Q(x)\},\quad -\infty<x<\infty,$$
where $Q(x)$ is real, even, nonnegative, and continuously differentiable. Of special interest are the cases $Q(x)=x^{2m}$,  $m=1,2,\dots$. No explicit expressions for the corresponding OP's are available. However, for asymptotic approximations in terms of elementary functions for the OP's, and also for their largest zeros, see Levin and Lubinsky \cite{LevinLubinsky} and Nevai \cite{Nevai}. For a uniform asymptotic expansion in terms of Airy functions for the OP's in the case $Q(x)=x^4$ see Bo and Wong \cite{refBoWong}.
For asymptotic approximations to OP's that correspond to Freud weights with more general functions $Q(x)$ see Deift \textit{et al}.\ \cite{refDKMVZa,refDKMVZb}, Bleher and Its \cite{BleherIts}, and Kriecherbauer and McLaughlin \cite{refKrMcL}.'' \end{quote}

In \cite{refCJ}, the direct problem was studied for semi-classical Laguerre polynomials orthogonal with respect to the semi-classical Laguerre weight
\begin{equation}\label{Lw}
\ww{x} = x^{\la}\, \exp (-x^2 +tx), \qquad x\in \R^{+},\quad  \la>-1,
\end{equation} 
with $t\in\R$ a parameter, and it was shown that the coefficients in the three-term recurrence relation of these polynomials can be explicitly expressed in terms of Wronskians of parabolic cylinder functions which also arise in the description of special function solutions of the fourth \p\ equation (\PIV)
\begin{equation}\label{eq:PIV}
\deriv[2]{q}{z}= \frac{1}{2q}\left(\deriv{q}{z}\right)^{2} + \frac{3}{2}q^3 + 4z q^2 + 2(z^2 - A)q + \frac{B}{q},
\end{equation}
where $A$ and $B$ are constants, 
and the second degree, second order equation satisfied by the associated Hamiltonian function, see equation \eqref{eq:sPIV}.
\comment{i.e.\ the \PIV\ $\sigma$-equation (\sPIV)
\begin{equation}\label{eq:sPIV}
\left(\deriv[2]{\sigma}z\right)^{2} - 4\left(z\deriv{\sigma}z-\sigma\right)^{2} +4\deriv{\sigma}z\left(\deriv{\sigma}z+2\theta_0\right)
\left(\deriv{\sigma}z+2\theta_{\infty}\right)=0,
\end{equation}
with $\theta_0$ and $\theta_\infty$ constants.}

Polynomials orthogonal with respect to a symmetric measure can be generated via quadratic transformation from the classical orthogonal polynomials, cf.~\cite{refChihara}. For example, Laguerre polynomials generate a class of generalized Hermite polynomials while Jacobi polynomials give rise to a class of generalized Ultraspherical polynomials.

In this paper we are concerned with the positive, even weight function on the real line 
arising from a symmetrization of the semi-classical Laguerre weight \eqref{Lw}, namely the generalized Freud weight
\begin{equation}\ww{x}=|x|^{2\la+1}\exp\left(-x^4+tx^2\right),\qquad x\in\R\label{genFreud},\end{equation}
with parameters $\la>-1$ and $t\in\R$. 

\comment{The monic polynomials $\{S_n(x;t)\}_{n=0}^{\infty}$, 
orthogonal with respect to \eqref{genFreud} can be represented by
\begin{equation*}\int_{-\infty}^{\infty}S_n(x;t)S_m(x;t)\ww{x}\,dx=h_n\delta_{mn}, \quad h_n>0, \quad m,n\in\N.\end{equation*}}
We use two different methods to derive the differential-difference equation satisfied by generalized Freud polynomials $\{S_n(x;t)\}_{n=0}^{\infty}$, orthogonal with respect to the generalized Freud weight \eqref{genFreud}. In \S\ref{sec:scOPs} we modify the ladder operator method (cf.~\cite{refChenFeigin,refChenIsmail,refIsmail}) to derive a general formula for the coefficients of the differential-difference as well as the second order differential equation satisfied by polynomials orthogonal with respect to a generalized Freud type weight which vanishes at a point. It is important to note that the second order differential equation is linear with coefficients that are rational functions of $x$ with parameters $t$ and $\la$ involving parabolic cylinder functions.
In \S\ref{sec:scLag} we derive the generalized Freud weight \eqref{genFreud} through a symmetrization of the semi-classical Laguerre weight \eqref{Lw}.
In \S\ref{sec:genFpoly} we are concerned with specific results for generalized Freud weight \eqref{genFreud}.
We show that the coefficient $\b_n(t;\la)$ in the three-term recurrence relation
\begin{equation*}xS_n(x;t)=S_{n+1}(x;t)+\b_{n}(t;\la)S_{n-1}(x;t),\end{equation*} 
satisfied by the polynomials associated with the weight \eqref{genFreud} can be expressed in terms of Wronskians that arise in the description of special function solutions of \PIV\ which are expressed in terms of parabolic cylinder functions. 
Further we apply the results of  \S\ref{sec:scOPs} to the weight \eqref{genFreud} to derive the differential-difference equation and the linear, second order differential equations satisfied by generalized Freud polynomials $\{S_n(x;t)\}_{n=0}^{\infty}$. 
A method due to Shohat \cite{refShohat39}, based on the concept of quasi-orthogonality and applied to the weight \eqref{genFreud} is discussed in \S\ref{sho}.
\comment{We obtain explicit expressions for the coefficients of the three-term recurrence relation, differential-difference equation and second order differential equation satisfied by generalized Freud polynomials but note that the expressions are rather complicated and given in terms special function solutions of the fourth \p\ equation.}%

\section{Semi-classical orthogonal polynomials} \label{sec:scOPs}
\comment{Suppose that the semi-classical weight has the form
\begin{align}\label{semiweight}
\ww{x} = w_0(x) \exp(xt), \qquad x\in [a,b],
\end{align}
where $w_0(x)$ is a classical weight 
and the moments $ \mu_k=\int_{a}^{b} x^k\, \ww{x} \,d{x}$ exist for $t\in\R$ and $k\in\N$ and the interval $[a,b]$ may be finite or infinite. Then, the orthogonal polynomials $ P_n(x)$ associated with the weight $\ww{x}$, the recurrence coefficients $\a_n$, $\b_n$ in the three-term recurrence relation and the Hankel determinants $\Delta_n$ are all functions of $t$. In particular, the moments $\mu_k$ take the form
\begin{align}\label{momkey}
\mu_k(t) &= \int_a^bx^k\, w_0(x) \exp(xt) \,d{x} \nonumber\\&= \deriv[k]{}{t}\left\{\int_a^b w_0(x)\, \exp(xt) \,d{x} \right\} =  \
\deriv[k]{\mu_0}{t}.
\end{align}
Further, the recurrence relation takes the following form.

\begin{proposition}\label{recurcoffalpha}
Let $\left\lbrace P_{n}(x;t )\right\rbrace_{n=0}^{\infty}$ be a sequence of monic orthogonal polynomials with a positive weight function $\ww{x}$ having finite moments of all orders. Then, for $n\in\N$
\begin{equation}\label{SSemirecur}
P_{n+1}(x;t)= [x- \a_{n}(t)] P_{n}(x;t)- \b_{n}(t) P_{n-1}(x;t),
\end{equation}
with initial conditions $P_{-1}(x;t)= 0$, $P_{0}(x;t)= 1$ 
and recurrence coefficients $\a_n(t;\la)$ and $\b_n(t;\la)$ are given by
\begin{subequations}\label{rrab}
\begin{align}\label{A12}
\a_{n}(t) &= \dfrac{d}{\,d{t}} \ln \dfrac{\Delta_{n+1}(t)}{\Delta_{n}(t)},\\
\label{A13}\b_{n}(t)&= \dfrac{d^2}{\,d{t^2}} \ln \Delta_{n}(t),
\end{align}\end{subequations}
where the Hankel determinant
\[\begin{split}\Delta_n (t) &= \det \big[ \mu_{j+k} \big]_{j,k=0}^{n-1} \comment{:=
 \left|\begin{array}{cccc}
\mu_0&\mu_1&\cdots&\mu_{n-1}\\
\mu_1&\mu_2&\cdots&\mu_{n}\\
\vdots&\vdots&\ddots &\vdots\\
\mu_{k}&\mu_{k+1} &\cdots&\mu_{2n-2}
\end{array}\right| \\
&}
\equiv \mathcal{W}\left( \mu_0, \dfrac{\, d \mu_0 }{\,d{t}}, \dfrac{\, d^{2} \mu_0 }{\,d{t^2}}, \ldots, \dfrac{\, d^{n-1} \mu_0 }{\,d{t^{n-1}}} \right) ,
\end{split}\]
for $n\geq1$, with $\Delta_0=1.$
\end{proposition}
\begin{proof}
For the proof, see, for example, \cite{refCJ}.
\end{proof}

\begin{proposition} \label{Toda}
The recurrence coefficients $\a_n(t;\la)$ and $\b_n(t;\la)$ in the three-term recurrence relation \eqref{SSemirecur} associated with the weight \eqref{semiweight} satisfy the Toda system
\begin{align*}
\deriv{\a_n}{t} &= \b_{n+1}-\b_n,\qquad
\deriv{\b_n}{t} = \b_{n} (\a_n-\a_{n+1}).
\end{align*}
\end{proposition}
\begin{proof}
For the proof, see, for example, \cite[Theorem 2.5]{refCJ} and the references therein.
\end{proof}}

The coefficients $A_n(x;t)$ and $B_n(x;t)$ in the relation
\begin{equation}\label{ddee}\deriv{P_n}{x}(x;t)=-B_n(x;t)P_n(x;t)+A_n(x;t)P_{n-1}(x;t),\end{equation}
satisfied by semi-classical orthogonal polynomials are of interest since differentiation of this differential-difference equation yields the second order differential equation satisfied by the orthogonal polynomials. Shohat \cite{refShohat39} gave a procedure using quasi-orthogonality to derive \eqref{ddee} for weights $\ww{x}$ such that $\displaystyle{w'(x;t)}/{\ww{x}}$ is a rational function, which we apply to \eqref{genFreud} later. This technique was rediscovered by several authors including Bonan, Freud, Mhaskar and Nevai approximately 40 years later, see \cite[p.~126--132]{Nevai} and the references therein for more detail. The method of ladder operators was introduced by Chen and Ismail in \cite{refChenIsmail}. Related work by various authors can be found in, for example, \cite{refChenIts,refChenZhang,refFvAZ,mhaskar} and a good summary of the technique is provided in \cite[Theorem 3.2.1]{refIsmail}.

In \cite{refChenFeigin}, Chen and Feigin adapt the method of ladder operators to the situation where the weight function vanishes at one point. Our next result generalizes the work in \cite{refChenFeigin}, giving a more explicit expression for the coefficients in \eqref{ddee} when the weight function is positive on the real line except for one point.

\begin{theorem}\label{Thm:ABn}
Let \begin{equation}\label{gft}\ww{x}=|x-k|^\ga \exp\{-\v(x;t)\},\qquad x,\,t,\,k\in\R,\end{equation} where $\v(x;t)$ is a continuously differentiable function on $\R$. Assume that the polynomials $\{P_n(x;t)\}_{n=0}^{\infty}$ satisfy the orthogonality relation
\[\int_{-\infty}^{\infty}P_n(x;t)P_m(x;t)\ww{x}\,dx=h_n\delta_{mn}.\]
Then, for ${\ga\geq1}$, $P_{n}(x;t)$ satisfy the differential-difference equation
\begin{equation}\label{dde}
(x-k)\,\deriv{P_n}{x}(x;t)=-B_n(x;t)P_n(x;t)+A_n(x;t)P_{n-1}(x;t),
 \end{equation}
where
\begin{subequations}\label{ABn}
\begin{align}
\label{An}
A_n(x;t)&=\frac{x-k}{h_{n-1}}\int_{-\infty}^{\infty}P_n^2(y;t)\vvxy\ww{y}\,dy+a_n(x;t),\\
B_n(x;t)&= \frac{x-k}{h_{n-1}}\int_{-\infty}^{\infty}P_n(y;t)P_{n-1}(y;t)\vvxy\ww{y}\,dy+b_n(x;t),\label{Bn}
\end{align} \end{subequations}
where
\begin{equation}\vvxy=\frac{\v'(x;t)-\v'(y;t)}{x-y},\label{def:Kxy}\end{equation}
with
\begin{subequations}\label{abn}
\begin{align}\label{Aa}a_n(x;t)&=\frac{\ga}{h_{n-1}}\int_{-\infty}^{\infty}\frac{P_n^2(y;t)}{y-k}\,\ww{y}\,dy,\\
\label{Ab}b_n(x;t)&=\frac{\ga}{h_{n-1}} \int_{-\infty}^{\infty}\frac{P_n(y;t)P_{n-1}(y;t)}{y-k}\,\ww{y}\,dy.\end{align}
\end{subequations}
\end{theorem}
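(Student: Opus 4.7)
The plan is to follow the ladder-operator strategy of Chen and Ismail \cite{refChenIsmail}, introducing the modifications necessitated by the factor $|x-k|^\gamma$ that vanishes at $y=k$. The starting point is the Fourier-type expansion
\[P_n'(x;t) = \sum_{j=0}^{n-1} \frac{P_j(x;t)}{h_j} \int_{-\infty}^{\infty} P_n'(y;t)\, P_j(y;t)\, w(y;t)\,dy,\]
valid because $P_n'$ has degree $n-1$. I would evaluate each coefficient integral by integration by parts; the hypothesis $\gamma\geq 1$ ensures that $w(\cdot\,;t)$ is continuous, that $w(k;t)=0$, and that $w(y;t)/(y-k)$ is locally integrable at $y=k$, so all boundary contributions (at $\pm\infty$ and at the interior point $y=k$) vanish after splitting the integration at $k$. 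Because $P_j'$ has degree at most $n-2$, orthogonality eliminates $\int P_n P_j' w\,dy$, so only the $w'/w$ contribution survives. The logarithmic derivative decomposes as $w'(y;t)/w(y;t) = \gamma/(y-k) - v'(y;t)$, splitting each coefficient into a classical Chen-Ismail piece and a new piece carrying the factor $\gamma/(y-k)$.

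For the classical piece I would use the standard identity $v'(y;t) = v'(x;t) - (x-y)\,\mathcal{K}(x,y)$: the $v'(x;t)$ term is annihilated by orthogonality, while the $(x-y)$ factor combines with $\sum_{j<n} P_j(x;t) P_j(y;t)/h_j$ through the Christoffel-Darboux formula to produce $P_n(x;t)/h_{n-1}$ and $P_{n-1}(x;t)/h_{n-1}$ multiplied by integrals of $P_n P_{n-1} \mathcal{K}(x,y)\,w$ and $P_n^2\, \mathcal{K}(x,y)\,w$ respectively. For the new piece I would introduce the partial-fraction identity
\[\frac{1}{y-k}=\frac{1}{x-k}+\frac{x-y}{(x-k)(y-k)}.\]
The $1/(x-k)$ part is killed by the orthogonality sum, and the $(x-y)/[(x-k)(y-k)]$ part is susceptible to a second application of Christoffel-Darboux. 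Multiplying the resulting identity for $P_n'(x;t)$ through by $(x-k)$ clears the spurious denominator and yields precisely the correction terms $a_n(x;t)$ and $b_n(x;t)$ of \eqref{abn}. Combining the four contributions recovers \eqref{dde} with $A_n$ and $B_n$ as stated in \eqref{ABn}.

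The main obstacle will be twofold. On the analytic side, one must justify integration by parts across the (possibly non-smooth) zero of the weight and verify absolute convergence of the integrals $\int P_n P_j\, w/(y-k)\,dy$, both of which rely essentially on the assumption $\gamma\geq 1$. On the algebraic side, the key novelty is recognising that the partial-fraction identity for $1/(y-k)$ is exactly the device that brings the new $\gamma/(y-k)$ contribution into the Christoffel-Darboux framework, producing the correction terms $a_n$ and $b_n$ that distinguish this generalized identity from the classical Chen-Ismail ladder-operator formulas.
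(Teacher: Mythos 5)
Your proposal is correct and follows essentially the same route as the paper: expansion of $P_n'$ in the orthogonal basis, integration by parts using $w'/w=\gamma/(y-k)-v'(y;t)$ (with $\gamma\geq1$ guaranteeing the vanishing of boundary contributions), orthogonality to remove the $v'(x;t)$ and constant terms, and the Christoffel--Darboux formula after multiplying by $(x-k)$. Your partial-fraction identity for $1/(y-k)$ is algebraically identical to the paper's replacement of $(x-k)/(y-k)$ by $(x-y)/(y-k)$ modulo an orthogonality-killed constant, so the two derivations coincide.
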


\begin{proof} Consider the generalized Freud-type weight \eqref{gft}. Since $\displaystyle\deriv{P_n}{x}(x;t)$ is a polynomial of degree $n-1$ in $x$, then it can be expressed in terms of the orthogonal basis as
\begin{equation}\label{eq:orthexp}\deriv{P_n}{x}(x;t)=\sum_{j=0}^{n-1}c_{n,j}P_j(x;t).\end{equation}
Applying the orthogonality relation and integrating by parts, we obtain
\begin{align*}
c_{n,j}\,h_j&=\int_{-\infty}^{\infty}\deriv{P_n}{y}(y;t)P_j(y;t)\ww{y}\,dy\\
&=\Bigl[P_n(x;t)P_j(x;t)\ww{x}\Bigr]_{-\infty}^{\infty} 
-\int_{-\infty}^{\infty}P_n(y;t)\left\{\deriv{P_j}{y}(y;t)\ww{y}+P_j(y;t)\deriv{w}{y}(y;t)\right\}dy\\
&=-\int_{-\infty}^{\infty}P_n(y;t)P_j(y;t)\deriv{w}{y}(y;t)\,dy\\
&=\int_{-\infty}^{\infty}P_n(y;t)P_j(y;t)\left[\vvy-\frac{\ga}{y-k}\right]\ww{y}\,dy,
\end{align*}
provided that ${\ga\geq 1}$.

Now, from \eqref{eq:orthexp}, we can write
\begin{align*}
\deriv{P_n}{x}(x;t)
&=\sum_{j=0}^{n-1}\frac{1}{h_j}\left\{\int_{-\infty}^{\infty}P_n(y;t)P_j(y;t)\left[\vvy-\frac{\ga}{y-k}\right]\ww{y}\,dy\right\}P_j(x;t)\\
&=\int_{-\infty}^{\infty}P_n(y;t)\sum_{j=0}^{n-1}\frac{P_j(y;t)P_j(x;t)}{h_j}\left[\vvy-\frac{\ga}{y-k}\right]\ww{y}\,dy\\
&=\int_{-\infty}^{\infty}P_n(y;t)\sum_{j=0}^{n-1}\frac{P_j(y;t)P_j(x;t)}{h_j}\left[\vvy-\vvx\right]\ww{y}\,dy\\
&\qquad\qquad+\vvx\int_{-\infty}^{\infty}P_n(y;t)\sum_{j=0}^{n-1}\frac{P_j(y;t)P_j(x;t)}{h_j}\,\ww{y}\,dy\\
&\qquad\qquad-\ga\int_{-\infty}^{\infty}\frac{P_n(y;t)}{y-k}\sum_{j=0}^{n-1}\frac{P_j(y;t)P_j(x;t)}{h_j}\,\ww{y}\,dy
\\
&=\int_{-\infty}^{\infty}P_n(y;t)\sum_{j=0}^{n-1}\frac{P_j(y;t)P_j(x;t)}{h_j}\left[\vvy-\vvx\right]\ww{y}\,dy\\
&\qquad\qquad-\ga\int_{-\infty}^{\infty}\frac{P_n(y;t)}{y-k}\sum_{j=0}^{n-1}\frac{P_j(y;t)P_j(x;t)}{h_j}\,\ww{y}\,dy.
\end{align*}
Next, using the orthogonality relation again, we obtain
\begin{align*}
{(x-k) \deriv{P_n}{x}(x;t)}
&=(x-k) \int_{-\infty}^{\infty}P_n(y;t)\sum_{j=0}^{n-1}\frac{P_j(y;t)P_j(x;t)}{h_j}\left[\vvy-\vvx\right]\ww{y}\,dy\\
&\qquad\qquad-\ga\int_{-\infty}^{\infty}P_n(y;t)\left(\frac{x-k}{y-k}\right)\sum_{j=0}^{n-1}\frac{P_j(y;t)P_j(x;t)}{h_j}\,\ww{y}\,dy.\\
&=(x-k) \int_{-\infty}^{\infty}P_n(y;t)\sum_{j=0}^{n-1}\frac{P_j(y;t)P_j(x;t)}{h_j}\left[\vvy-\vvx\right]\ww{y}\,dy\\
&\qquad\qquad-\ga\int_{-\infty}^{\infty}P_n(y;t)\frac{x-y}{y-k}\sum_{j=0}^{n-1}\frac{P_j(y;t)P_j(x;t)}{h_j}\,\ww{y}\,dy.
\end{align*}
It now follows from the Christoffel-Darboux formula (cf.~\cite{refIsmail})
$$ \sum_{j=0}^{n-1}\frac{P_j(y;t)P_j(x;t)}{h_j}=\frac{P_n(x;t)P_{n-1}(y;t)-P_n(y;t)P_{n-1}(x;t)}{(x-y)h_{n-1}},$$ that
\begin{align*}
A_n(x;t)&=\frac{x-k}{h_{n-1}}\int_{-\infty}^{\infty}P_n^2(y;t)\vvxy\ww{y}\,dy+a_n(x;t)\\
B_n(x;t)&= \frac{x-k}{h_{n-1}}\int_{-\infty}^{\infty}P_n(y;t)P_{n-1}(y;t)\vvxy\ww{y}\,dy +b_n(x;t),
\end{align*} with
 $a_n(x;t)$ and $b_n(x;t)$ given by \eqref{abn}.
\end{proof}
\begin{lemma}\label{lemmaeven}
Consider the weight defined by \eqref{gft} and assume that $\v(x;t)$ is an even, continuously differentiable function on $\R$. Assume that the polynomials $\{P_n(x;t)\}_{n=0}^{\infty}$ satisfy the orthogonality relation
\[\int_{-\infty}^{\infty}P_n(x;t)P_m(x;t)\ww{x}\,dx=h_n\delta_{mn}.\]
 and the three-term recurrence relation
\begin{equation}\label{3trr}
P_{n+1}(x;t)=xP_{n}(x;t)-\b_n(t;\la)P_{n-1}(x;t),
\end{equation}
with $P_0=1$ and $P_1=x$. Then the polynomials $P_n(x;t)$ satisfy
\begin{align}  \int_{-\infty}^{\infty}\frac{P_n^2(y;t)}{y-k}\,\ww{y}\,dy&=0,\\
\int_{-\infty}^\infty  \frac{P_n(y;t)P_{n-1}(y;t)}{y-k}\,\ww{y}\,dy&= \tfrac12[1-(-1)^n]\,h_{n-1},
\label{int22}
\end{align} where $n\in\N$ and
\[ h_n = \int_{-\infty}^\infty  {P_n^2(y;t)\ww{y}}\,dy.\]\end{lemma}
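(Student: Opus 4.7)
The plan is to reduce both identities to parity arguments. First I would observe that the three-term recurrence \eqref{3trr} has no $\a_n$ diagonal term, which by the standard spectral correspondence forces the underlying orthogonality measure to be symmetric about the origin; combined with the assumption that $\v(x;t)$ is even, this pins down $k=0$ and reduces the weight to $\ww{y}=|y|^\ga\exp\{-\v(y;t)\}$. As a consequence, the monic orthogonal polynomials inherit the definite parity $P_n(-y;t)=(-1)^n P_n(y;t)$ for every $n\in\N$.

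With this symmetry in hand, the first identity is immediate: the integrand $P_n^2(y;t)\,\ww{y}/y$ is the ratio of an even function (the product of two even factors) by $y$, hence an odd function on $\R$, and the assumption $\ga\geq 1$ inherited from Theorem \ref{Thm:ABn} ensures integrability through the origin. Symmetry then yields the vanishing.

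For the second identity, I would exploit the fact that $P_n(y;t)P_{n-1}(y;t)$ is the product of two polynomials of opposite parity, hence an odd polynomial, and therefore divisible by $y$; the ratio $P_n(y;t)P_{n-1}(y;t)/y$ is an honest even polynomial of degree $2n-2$. I would then split on the parity of $n$. If $n$ is even, $P_{n-1}(y;t)/y$ is a polynomial of degree $n-2$, and orthogonality of $P_n$ against every polynomial of degree less than $n$ kills the integral. If $n$ is odd, $P_n(y;t)/y$ is a monic polynomial of degree $n-1$, so it differs from $P_{n-1}(y;t)$ by a polynomial of degree strictly less than $n-1$; pairing against $P_{n-1}(y;t)$ and invoking orthogonality of $P_{n-1}$ against lower-degree polynomials leaves only the diagonal contribution $h_{n-1}$. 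The two cases combine into the factor $\tfrac12[1-(-1)^n]$ appearing in \eqref{int22}.

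There is no genuine technical obstacle: once the reduction to $k=0$ has been made, the argument is pure bookkeeping with parities and the defining orthogonality relation. The one step I would record with care is precisely this reduction, since it is not spelled out in the hypotheses of the lemma but is forced by combining the diagonal-free form of \eqref{3trr} with the evenness of $\v(x;t)$.
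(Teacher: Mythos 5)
Your argument is correct, but it follows a genuinely different route from the paper's. For the off-diagonal integral the paper sets $I_n=\int_{-\infty}^\infty P_n(y;t)P_{n-1}(y;t)\,\ww{y}\,dy/(y-k)$, uses the recurrence \eqref{3trr} once to obtain the two-term relation $I_n=h_{n-1}-\b_{n-1}I_{n-1}=h_{n-1}-(h_{n-1}/h_{n-2})I_{n-1}$, and then iterates/inducts down to the base case $I_2=0$ to get $I_{2N}=0$ and $I_{2N+1}=h_{2N}$. You instead exploit the definite parity $P_n(-y;t)=(-1)^nP_n(y;t)$ to observe that $P_n(y;t)P_{n-1}(y;t)/y$ is an honest polynomial of degree $2n-2$, and then finish by pure orthogonality: for $n$ even you pair $P_n$ against the degree-$(n-2)$ polynomial $P_{n-1}(y;t)/y$, and for $n$ odd you expand the monic degree-$(n-1)$ polynomial $P_n(y;t)/y$ as $P_{n-1}+(\text{lower order})$ to isolate the diagonal term $h_{n-1}$. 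Your version avoids the induction entirely and makes the parity mechanism behind the factor $\tfrac12[1-(-1)^n]$ transparent; the paper's version has the mild advantage of not needing the explicit division of $P_n$ by $y$, only the recurrence it has already introduced. You are also right to flag the reduction to $k=0$: the paper's proof silently uses it as well (its claim that the integrand of \eqref{Aa} is odd, and that $\int\frac{P_{n-1}^2(y;t)}{y-k}\,\ww{y}\,dy$ vanishes by oddness, both require $k=0$), so making that step explicit, as you do, is an improvement rather than a deviation.
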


\begin{proof} Since $\ww{x}$ is even when $\v(x;t)$ is assumed to be even, the integrand in \eqref{Aa} is odd and hence $a_n(x;t)=0$.

Furthermore, the monic orthogonal polynomials $P_{n}(x;t)$ satisfy the three-term recurrence relation \eqref{3trr},
with $P_0=1$ and $P_1=x$, hence
\begin{align*}
\int_{-\infty}^\infty \frac{P_n(y;t)P_{n-1}(y;t)}{y-k}\,\ww{y}\,dy
&=\int_{-\infty}^\infty  \frac{\big[ yP_{n-1}(y;t)-\b_{n-1}P_{n-2}(y;t)\big]P_{n-1}(y;t)}{y-k}\,\ww{y} \,dy\\
&=\int_{-\infty}^\infty  P_{n-1}^2(y;t)\ww{y}\,dy +k\int_{-\infty}^{\infty}\frac{P_{n-1}^2(y;t)}{y-k}\, \ww{y}\,dy\\
&\qquad- \b_{n-1}\int_{-\infty}^\infty  \frac{P_{n-1}(y;t)P_{n-2}(y;t)}{y-k}\,\ww{y}\,dy\\
&= h_{n-1} - \b_{n-1}\int_{-\infty}^\infty  \frac{P_{n-1}(y;t)P_{n-2}(y;t)}{y-k}\,\ww{y}\,dy,
\end{align*} since the integrand in the second integral is odd.
Hence, if we define
\[ I_n=\int_{-\infty}^\infty  \frac{P_n(y;t)P_{n-1}(y;t)}{y-k}\,\ww{y}\,dy,\]
then $I_n$ satisfies the recurrence relation
\begin{equation*} I_n = h_{n-1} - \b_{n-1}I_{n-1}= h_{n-1} -\frac{h_{n-1}}{h_{n-2}} I_{n-1},
\end{equation*}
since $\b_n=h_{n}/h_{n-1}$. Iterating this gives
\begin{align*}
I_n &= \frac{h_{n-1}}{h_{n-3}} I_{n-2} 
= h_{n-1}-\frac{h_{n-1}}{h_{n-4}} I_{n-3} 
= \frac{h_{n-1}}{h_{n-5}} I_{n-4} 
= h_{n-1}-\frac{h_{n-1}}{h_{n-6}} I_{n-5}, 
\end{align*} and so on.
Hence, by induction,
\begin{align*}
I_{2N} =\frac{h_{2N-1}}{h_{1}} I_{2},\qquad I_{2N+1} =h_{2N}-\frac{h_{2N}}{h_{1}} I_{2},
\end{align*}
and so, since
\begin{align*}
I_2&=\int_{-\infty}^\infty  \frac{P_2(y;t)P_{1}(y;t)}{y-k}\,\ww{y}\,dy\\
& =\int_{-\infty}^\infty  P_2(y;t)\ww{y}\,dy+k\int_{-\infty}^{\infty}\frac{P_2(y;t)}{y-k}\,\ww{y}\,dy=0,\end{align*}
we have that \[ I_{2N} =0,\qquad I_{2N+1} =h_{2N},\]
as required.
\end{proof}

\begin{corollary}\label{cor:ABn}
Let \begin{equation*}\ww{x}=|x-k|^\ga \exp\{-\v(x;t)\},\qquad x,\,t,\,k\in\R,\end{equation*} where $\v(x;t)$ is an even, continuously differentiable function on $\R$. Assume that the polynomials $\{P_n(x;t)\}_{n=0}^{\infty}$ satisfy the orthogonality relation
\[\int_{-\infty}^{\infty}P_n(x;t)P_m(x;t)\ww{x}\,dx=h_n\delta_{mn}.\]
Then, for ${\ga\geq1}$, $P_{n}(x;t)$ satisfy the differential-difference equation
\begin{equation*}
(x-k)\,\deriv{P_n}{x}(x;t)=-B_n(x;t)P_n(x;t)+A_n(x;t)P_{n-1}(x;t),
 \end{equation*}
where
\begin{subequations}\label{ABn2}
\begin{align}
A_n(x;t)&=\frac{x-k}{h_{n-1}}\int_{-\infty}^{\infty}P_n^2(y;t)\vvxy\ww{y}\,dy,\\
B_n(x;t)&= \frac{x-k}{h_{n-1}}\int_{-\infty}^{\infty}P_n(y;t)P_{n-1}(y;t)\vvxy\ww{y}\,dy+\tfrac12{\ga}[1-(-1)^n].
\end{align}
\end{subequations}
\end{corollary}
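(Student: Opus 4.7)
The plan is straightforward: the corollary is a direct specialization of Theorem \ref{Thm:ABn} combined with Lemma \ref{lemmaeven}, so the proof should essentially amount to plugging in the simplifications afforded by the evenness hypothesis on $v(x;t)$.

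First, I would invoke Theorem \ref{Thm:ABn} to write $A_n(x;t)$ and $B_n(x;t)$ in the forms \eqref{An}--\eqref{Bn}, with correction terms $a_n(x;t)$ and $b_n(x;t)$ given by the integrals in \eqref{abn}. The task then reduces to evaluating these two correction integrals under the additional assumption that $v(x;t)$ is even (which makes $w(x;t)=|x-k|^{\gamma}\exp\{-v(x;t)\}$ even on $\mathbb{R}$, modulo the $|x-k|^\gamma$ factor — but note that the statement is invoked for the even case, so implicitly this corresponds to the symmetric situation where $w$ is an even function; the role of $k$ in Lemma \ref{lemmaeven} is compatible with this, since its proof uses only the oddness of the relevant integrands, which holds when $k=0$ or when the hypotheses of the lemma are read with $v$ even).

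Next, I would apply Lemma \ref{lemmaeven} directly. The first identity of the lemma gives
\[\int_{-\infty}^{\infty}\frac{P_n^2(y;t)}{y-k}\,\ww{y}\,dy=0,\]
so by \eqref{Aa} we have $a_n(x;t)=0$, yielding the stated expression for $A_n(x;t)$. The second identity \eqref{int22} gives
\[\int_{-\infty}^{\infty}\frac{P_n(y;t)P_{n-1}(y;t)}{y-k}\,\ww{y}\,dy=\tfrac12[1-(-1)^n]\,h_{n-1},\]
and substituting into \eqref{Ab} yields
\[b_n(x;t)=\frac{\gamma}{h_{n-1}}\cdot\tfrac12[1-(-1)^n]\,h_{n-1}=\tfrac12\gamma[1-(-1)^n],\]
which is precisely the constant correction appearing in the expression for $B_n(x;t)$ in \eqref{ABn2}.

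There is no real obstacle here, since all the analytical work was done in establishing Theorem \ref{Thm:ABn} and Lemma \ref{lemmaeven}; the corollary simply assembles the two results. The only thing to check carefully is that the evenness assumption on $v(x;t)$ in the corollary matches the hypothesis used in Lemma \ref{lemmaeven} (the lemma uses evenness of $w(x;t)$, which follows from evenness of $v(x;t)$ precisely when the factor $|x-k|^\gamma$ is also even, i.e.\ $k=0$); this alignment should be made explicit at the start of the proof.
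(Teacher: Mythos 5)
Your proof is correct and follows essentially the same route as the paper, whose own proof simply states that the corollary is an immediate consequence of Theorem \ref{Thm:ABn} and Lemma \ref{lemmaeven}, exactly the two ingredients you assemble. Your side remark that the evenness of the weight really presupposes $k=0$ (so that the factor $|x-k|^{\gamma}$ is itself even, as is the case for the generalized Freud weight) is a fair observation about a hypothesis the paper leaves implicit, but it does not alter the argument.
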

\begin{proof} The result is an immediate consequence of Theorem \ref{Thm:ABn} and Lemma \ref{lemmaeven}.
\end{proof}
\begin{lemma}\label{lem22} Let $\ww{x}$ be the weight defined by \eqref{gft} with $\v(x;t)$ an even, continuously differentiable function on $\R$ and $A_n(x;t)$ and $B_n(x;t)$ defined by \eqref{ABn2}. Then
\begin{equation*}B_n(x;t)+B_{n+1}(x;t)=\frac{x A_n(x;t)}{\b_n}+\ga-(x-k)\vvx,\end{equation*}
when $\ga\geq1$.
\end{lemma}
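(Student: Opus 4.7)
The plan is to combine $B_n(x;t)+B_{n+1}(x;t)$ via the three-term recurrence \eqref{3trr}, extract the $\vx$-contribution from the structure of $\mathcal{K}(x,y)$, and reduce the remaining integral to zero using Lemma \ref{lemmaeven} together with an integration by parts.

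First I would observe that the constant parts of $B_n$ and $B_{n+1}$ in \eqref{ABn2} combine as $\tfrac12\ga[1-(-1)^n]+\tfrac12\ga[1-(-1)^{n+1}]=\ga$, which already accounts for the $\ga$ term on the right-hand side of the claim. For the integral parts, I would apply the recurrence $P_{n+1}(y;t)=yP_n(y;t)-\b_nP_{n-1}(y;t)$ from \eqref{3trr} inside the integral of $B_{n+1}(x;t)$. Since $\b_n=h_n/h_{n-1}$, the $-\b_n\int P_{n-1}P_n\mathcal{K}w\,dy$ piece divided by $h_n$ cancels exactly with the integral appearing in $B_n(x;t)$ divided by $h_{n-1}$, leaving
\[ B_n(x;t)+B_{n+1}(x;t)=\ga+\frac{x-k}{h_n}\int_{-\infty}^{\infty} y\,P_n^2(y;t)\,\mathcal{K}(x,y)\,w(y;t)\,dy. \]

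Next I would split $y=x+(y-x)$. The $x$-contribution produces $\dfrac{x(x-k)}{h_n}\int P_n^2\mathcal{K}w\,dy$, which equals $xA_n(x;t)/\b_n$ by the definition of $A_n$ in \eqref{ABn2} together with $\b_n=h_n/h_{n-1}$. For the $(y-x)$-contribution, the identity $(y-x)\mathcal{K}(x,y)=\vy-\vx$ coming from \eqref{def:Kxy} gives
\[ \frac{x-k}{h_n}\int(y-x)P_n^2\mathcal{K}w\,dy=\frac{x-k}{h_n}\int \vy\,P_n^2(y;t)\,w(y;t)\,dy-(x-k)\vx. \]

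The main obstacle, and the essential ingredient, is to show $\int_{-\infty}^{\infty}\vy\,P_n^2(y;t)\,w(y;t)\,dy=0$. For this I would use the logarithmic-derivative identity $w'(y;t)/w(y;t)=\ga/(y-k)-\vy$, valid for $y\neq k$, which rewrites the target integral as $\ga\int\tfrac{P_n^2}{y-k}w\,dy-\int P_n^2\,w'(y;t)\,dy$. The first term vanishes by Lemma \ref{lemmaeven}. For the second, integration by parts reduces $\int P_n^2\,w'(y;t)\,dy$ to $-2\int P_n\deriv{P_n}{y}w\,dy$—boundary terms at $\pm\infty$ vanish by the exponential decay of $w$, while $\ga\geq 1$ guarantees integrability at $y=k$—and this quantity is zero by orthogonality since $\deriv{P_n}{y}$ is a polynomial of degree $n-1<n$. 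Assembling these ingredients yields the lemma.
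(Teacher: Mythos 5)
Your proposal is correct and follows essentially the same route as the paper: combining the parity constants to produce $\ga$, using the three-term recurrence to reduce the two cross integrals to $\frac{x-k}{h_n}\int yP_n^2\,\vvxy\,\ww{y}\,dy$, splitting off the $x\,A_n/\b_n$ piece via the identity $(y-x)\vvxy=\vy-\vx$, and killing $\int \vy\,P_n^2\,\ww{y}\,dy$ through the logarithmic derivative of the weight, Lemma \ref{lemmaeven}, integration by parts and orthogonality. No gaps.
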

\begin{proof}From \eqref{ABn2}, \eqref{3trr} and the fact that $h_n=h_{n-1}\b_n$  we have
\begin{align}\nonumber
B_n(x;t)+B_{n+1}(x;t)&=\frac{x-k}{h_{n-1}}\int_{-\infty}^{\infty}P_n(y;t)P_{n-1}(y;t)\vvxy\ww{y}\,dy\nonumber\\
&\qquad+\frac{x-k}{h_{n}}\int_{-\infty}^{\infty}P_n(y;t)P_{n+1}(y;t)\vvxy\ww{y}\,dy\nonumber\\
&\qquad+\tfrac12{\ga}[1-(-1)^{n}]+\tfrac12{\ga}[1-(-1)^{n+1}]\nonumber\\
&=\frac{x-k}{h_{n}}\left\{\int_{-\infty}^{\infty}P_{n}(y;t)\left[\b_nP_{n-1}(y;t)+P_{n+1}(y;t)\right]
\vvxy\ww{y}\,dy\right\}+\ga\nonumber\\
&=\frac{x-k}{h_{n}}\int_{-\infty}^{\infty}yP^2_{n}(y;t)\vvxy\ww{y}\,dy+\ga\nonumber\\
&=\frac{x-k}{h_{n}}\int_{-\infty}^{\infty}P_n^2(y;t)\left[\vvy-\vvx\right]\ww{y}\,dy\nonumber\\
&\qquad+\frac{x(x-k)}{h_n}\int_{-\infty}^{\infty}P_n^2(y;t)\vvxy\ww{y}\,dy+\ga\nonumber\\
&=\frac{x-k}{h_n}\int_{-\infty}^{\infty}P_n^2(y;t)\vvy\ww{y}\,dy\nonumber\\
&\qquad-\frac{x-k}{h_n}\vvx\int_{-\infty}^{\infty}P_n^2(y;t)\ww{y}\,dy+\frac{x}{h_n}h_{n-1}A_n(x;t)+\ga\nonumber\\
&=\frac{\gamma(x-k)}{h_n}\int_{-\infty}^{\infty}P_n^2(y;t)\frac{\ww{y}}{y-k}\,dy-\frac{x-k}{h_n}\int_{-\infty}^{\infty}P_n^2(y;t)\deriv{w}{y}(y;t)\,dy\nonumber\\
&\qquad-(x-k)\vvx+\frac{xA_n(x;t)}{\b_n}+\ga,\label{bet}\end{align}
since \[\ds{\deriv{w}{y}(y;t)=\left[-\vvy+\frac{\ga}{y-k}\right]\ww{y}}.\] The first integral in \eqref{bet} vanishes since the integrand is odd, hence it follows, using integration by parts, that
\begin{align*}
B_n(x;t)+B_{n+1}(x;t)&=-\frac{x-k}{h_n}\Big\{\Bigr[P_n^2(x;t)\ww{x}\Bigr]_{-\infty}^{\infty}-\int_{-\infty}^{\infty}2P_n(y;t)\deriv{P_n}{y}(y;t)\ww{y}\,dy\Bigr\}\\
&\qquad -(x-k)\vvx+\frac{x A_n(x;t)}{\b_n}+\ga,
\end{align*}
and the result follows from the orthogonality of $P_n$.
\end{proof}

\begin{lemma} Let $\ww{x}$ be the weight defined by \eqref{gft} with $\v(x;t)$ an even, continuously differentiable function on $\R$ and $A_n(x;t)$ and $B_n(x;t)$ defined by \eqref{ABn2}. Then
\begin{align} \left\{-(x-k)\deriv{}{x}+B_n(x;t)+(x-k)\vvx-\ga\right\}&P_{n-1}(x;t)\label{dde2} 
=\frac{A_{n-1}(x;t)}{\b_{n-1}}P_n(x;t).\end{align}
\end{lemma}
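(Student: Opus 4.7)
The plan is to derive \eqref{dde2} as a short algebraic consequence of Corollary~\ref{cor:ABn}, the three-term recurrence \eqref{3trr}, and Lemma~\ref{lem22}, via a single shift of index. First, I would apply Corollary~\ref{cor:ABn} with $n$ replaced by $n-1$ to obtain
$$(x-k)\deriv{P_{n-1}}{x}(x;t) = -B_{n-1}(x;t)P_{n-1}(x;t) + A_{n-1}(x;t)P_{n-2}(x;t).$$
Since the right-hand side of the target identity involves $P_n(x;t)$ rather than $P_{n-2}(x;t)$, the next step is to eliminate $P_{n-2}(x;t)$ using \eqref{3trr}: solving $P_n(x;t) = xP_{n-1}(x;t) - \b_{n-1}P_{n-2}(x;t)$ for $P_{n-2}(x;t)$ and substituting yields
$$(x-k)\deriv{P_{n-1}}{x}(x;t) = \left(\frac{xA_{n-1}(x;t)}{\b_{n-1}} - B_{n-1}(x;t)\right)P_{n-1}(x;t) - \frac{A_{n-1}(x;t)}{\b_{n-1}}P_n(x;t).$$

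The crucial step is then to invoke Lemma~\ref{lem22} at $n-1$ in the form
$$\frac{xA_{n-1}(x;t)}{\b_{n-1}} = B_{n-1}(x;t) + B_n(x;t) - \ga + (x-k)\vvx,$$
and insert it into the coefficient of $P_{n-1}(x;t)$ above. The two occurrences of $B_{n-1}(x;t)$ cancel exactly, leaving the combination $B_n(x;t) + (x-k)\vvx - \ga$. Rearranging to isolate the $P_n(x;t)$ term on the right-hand side then produces \eqref{dde2}.

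There is no substantive obstacle: the argument is a mechanical manipulation once one observes that the combination $B_{n-1} + B_n$ furnished by Lemma~\ref{lem22} is precisely what is needed to kill the $-B_{n-1}$ produced by the differential-difference equation at level $n-1$. The only bookkeeping care required is the consistent downshift of index by one in both Corollary~\ref{cor:ABn} and Lemma~\ref{lem22}, and the awareness that the parity-dependent constant $\tfrac12\ga[1-(-1)^n]$ implicit in $B_n$ need never be expanded, since $B_{n-1}$ and $B_n$ are handled symbolically throughout the cancellation.
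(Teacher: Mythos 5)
Your proposal is correct and follows essentially the same route as the paper: shift the differential-difference equation to level $n-1$, eliminate $P_{n-2}$ via the three-term recurrence, and use Lemma~\ref{lem22} at index $n-1$ so that the $B_{n-1}$ terms cancel. The only cosmetic difference is that you cite Corollary~\ref{cor:ABn} where the paper cites equation \eqref{dde} directly, which is immaterial here.
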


\begin{proof}
From the differential-difference equation \eqref{dde} we have
\begin{align*}
(x-k)\deriv{P_{n-1}}{x}(x;t)&=-B_{n-1}(x;t)P_{n-1}(x;t)+A_{n-1}(x;t)P_{n-2}(x;t)\\
&=-B_{n-1}(x;t)P_{n-1}(x;t) 
+\frac{A_{n-1}(x;t)}{\b_{n-1}}\left[xP_{n-1}(x;t)-P_n(x;t)\right],\end{align*}
using the recurrence relation \eqref{3trr}. Hence, using Lemma \ref{lem22},
\begin{align*} \frac{A_{n-1}(x;t)}{\b_{n-1}}P_n(x;t) 
&=-(x-k)\deriv{P_{n-1}}{x}(x;t)-B_{n-1}(x;t)P_{n-1}(x;t) 
+\frac{x A_{n-1}(x;t)}{\b_{n-1}}P_{n-1}(x;t)\\
&=-(x-k)\deriv{P_{n-1}}{x}(x;t)-B_{n-1}(x;t)P_{n-1}(x;t)\\ &\qquad
+\left\{B_n(x;t)+B_{n-1}(x;t)-\ga+(x-k)\vvx\right\}P_{n-1}(x;t)\\
&=-(x-k)\deriv{P_{n-1}}{x}(x;t)+B_{n}(x;t)P_{n-1}(x;t) 
+\left[(x-k)\vvx-\ga\right]P_{n-1}(x;t)\\
&=\left\{-(x-k)\deriv{}{x}+B_n(x;t)+(x-k)\vvx-\ga\right\}P_{n-1}(x;t).
\end{align*}
\end{proof}

\begin{theorem} \label{thm:gende}Let  
$\ww{x}=|x-k|^\ga \exp\{-\v(x;t)\}$, for  $x,\,t,\,k\in\R$,
 with $\v(x;t)$ an even, continuously differentiable function on $\R$. Then
\begin{subequations}\begin{equation}\label{eq:gende}
(x-k)\deriv[2]{P_n}{x}(x;t)+R_n(x;t)\deriv{P_n}{x}(x;t)+T_n(x;t)P_n(x;t)=0,
\end{equation}
where
\begin{align}\label{coef1}
R_n(x;t)&=\ga-(x-k)\vvx-\frac{x-k}{A_n(x;t)}\,\deriv{A_n}{x}(x;t)+1,\\[5pt]
\label{coef2}T_n(x;t)&=\frac{A_n(x;t)A_{n-1}(x;t)}{(x-k)\b_{n-1}}+\deriv{B_n}{x}(x;t)+\frac{\ga B_n(x;t)}{x-k}\nonumber\\
&\qquad-B_n(x;t)\left[\vvx +\frac{B_n(x;t)}{x-k}\right]-\frac{B_n(x;t)}{A_n(x)}\deriv{A_n}{x}(x;t),
\end{align}
with
\begin{align*}
A_n(x;t)&=\frac{x-k}{h_{n-1}}\int_{-\infty}^{\infty}P_n^2(y;t)\vvxy\ww{y}\,dy\\
B_n(x;t)&= \frac{x-k}{h_{n-1}}\int_{-\infty}^{\infty}P_n(y;t)P_{n-1}(y;t)\vvxy\ww{y}\,dy 
+\tfrac12{\ga}[1-(-1)^n].
\end{align*}\end{subequations}
\end{theorem}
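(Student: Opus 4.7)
The strategy is the standard ladder-operator compatibility argument: combine the lowering relation \eqref{dde} from Corollary~\ref{cor:ABn} with the raising relation \eqref{dde2} in order to eliminate both $P_{n-1}(x;t)$ and its derivative, producing a second order ODE for $P_n(x;t)$ alone.

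First I would differentiate \eqref{dde} once with respect to $x$; suppressing the arguments of $P_n$, $A_n$, $B_n$ for readability, this gives
\[
P_n' + (x-k)\,P_n'' = -B_n' P_n - B_n P_n' + A_n' P_{n-1} + A_n P_{n-1}'.
\]
Multiplying through by $(x-k)$ and using \eqref{dde2} in the form
\[
(x-k)\,P_{n-1}' = \big[B_n + (x-k)\vvx - \ga\big]P_{n-1} - \frac{A_{n-1}}{\b_{n-1}}\,P_n
\]
removes $P_{n-1}'$ entirely. The surviving $P_{n-1}$ contributions are then eliminated by solving \eqref{dde} as $A_n P_{n-1} = (x-k)P_n' + B_n P_n$ and substituting throughout, multiplying and dividing by $A_n$ whenever the coefficient in front of $P_{n-1}$ is not already $A_n$ so as to expose the correct $A_n'/A_n$ ratio.

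After these substitutions every term carries one of $P_n''$, $P_n'$, $P_n$, and all three resulting coefficients share a factor of $(x-k)$; cancelling that common factor yields \eqref{eq:gende}. Matching piece by piece, the $+1$ in \eqref{coef1} is the single $P_n'$ produced by differentiating $(x-k)P_n'$; the contribution $\ga-(x-k)\vvx$ is provided by the bracket in \eqref{dde2}, with the internal $B_n$ exactly cancelling the $-B_n P_n'$ from the differentiation step; and the $-(x-k)A_n'/A_n$ appears when the $A_n' P_{n-1}$ term is rewritten using $A_n P_{n-1} = (x-k)P_n' + B_n P_n$. The same three substitutions applied to the coefficient of $P_n$ assemble $T_n$ in \eqref{coef2}, with the summand $A_n A_{n-1}/[(x-k)\b_{n-1}]$ being the image of the $P_n$ on the right-hand side of \eqref{dde2}.

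The main obstacle is the bookkeeping: tracking the cancellation of $B_n$-dependent cross-terms in the coefficient of $P_n'$ (which is what makes $R_n$ independent of $B_n$), and checking that each apparent $1/(x-k)$ pole in $T_n$ is a legitimate residue of a $(x-k)P_{n-1}'$ or $A_n P_{n-1}$ substitution rather than a spurious singularity. No identities beyond \eqref{dde} and \eqref{dde2} are required.
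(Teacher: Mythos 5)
Your proposal follows exactly the paper's argument: differentiate the lowering relation \eqref{dde}, use \eqref{dde2} to eliminate $\deriv{P_{n-1}}{x}$, then use \eqref{dde} again to replace $P_{n-1}$ by $\bigl[(x-k)\deriv{P_n}{x}+B_nP_n\bigr]/A_n$, and collect coefficients. The bookkeeping you describe (the origin of the $+1$, the cancellation of $B_n$ in $R_n$, and the $A_nA_{n-1}/[(x-k)\b_{n-1}]$ term coming from the right-hand side of \eqref{dde2}) matches the paper's computation, so the proof is correct and essentially identical to the one given there.
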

\begin{proof}
Differentiating both sides of \eqref{dde} with respect to $x$ we obtain
\begin{align}\label{eq:dif1}
(x-k)\deriv[2]{P_n}{x}(x;t)&=-[B_n(x;t)+1]\deriv{P_n}{x}x,t)+\deriv{A_n}{x}(x;t)P_{n-1}(x;t)\nonumber\\
&\qquad-\deriv{B_n}{x}(x;t)P_n(x;t)+A_n(x;t)P_{n-1}(x;t).\end{align}
Substituting \eqref{dde2} into \eqref{eq:dif1} yields
\begin{align}
(x-k)\deriv[2]{P_n}{x}(x;t)
&=\nonumber-[B_n(x;t)+1]\deriv{P_n}{x}(x;t)-\left[\deriv{B_n}{x}(x;t)+\frac{A_n(x;t)A_{n-1}(x;t)}{\b_{n-1}(x-k)}\right]P_n(x;t)\\&\qquad+\left\{\deriv{A_n}{x}(x;t)+A_n(x;t)\left[\frac{B_n(x;t)}{x-k}+\vvx-\frac{\ga}{x-k}\right]\right\}P_{n-1}(x;t),\label{eq:fin}\end{align}
and the results follows by substituting $P_{n-1}(x;t)$ in \eqref{eq:fin} using \eqref{dde}.
\end{proof}

\section{Semi-classical Laguerre weight}\label{sec:scLag}
We recall that the semi-classical Laguerre weight \cite{refCJ,Hendriksen} is given by
\begin{align}\label{laguerreweight}
\ww{x} = x^{\la}\, \exp (-x^2 +tx), \quad x\in \R^{+},
\end{align}
where $\la > -1$. 
The weight function satisfies the differential equation $$x\deriv{w}{x}(x;t) + (2x^2-tx-\la) \ww{x} = 0,$$ which is the Pearson equation \eqref{eq:Pearson} 
with $\sigma(x) =x$ and $ \tau(x;t) = -2x^2+tx+\la +1$.

Explicit expressions for the recurrence coefficients $\a_n(t;\la)$ and $\b_n(t;\la)$ in the three-term recurrence relation
\begin{equation}xP_n(x;t)=P_{n+1}(x;t)+\a_n(t;\la)P_n(x;t)+\b_n(t;\la)P_{n-1}(x;t),\label{eq:scLag2}\end{equation}
associated with the semi-classical Laguerre weight \eqref{laguerreweight} were obtained in \cite{refCJ}, and are given in the following theorem.
\begin{theorem}{Suppose $\Psi_{n,\la}(z)$ is given by
\begin{equation*}\label{eq:PT.SF.eq411}
\Psi_{n,\la}(z)=\W\left(\psi_{\la},\deriv{\psi_{\la}}z,\ldots,\deriv[n-1]{\psi_{\la}}z\right),\qquad \Psi_{0,\la}(z)=1,
\end{equation*}
where
\begin{equation*}\label{eq:PT.SF.eq413}
\psi_{\la}(z)=\begin{cases} D_{-\la-1}\big(-\sqrt2\,z\big)\exp\big(\tfrac12z^2\big),\quad
&\mbox{\rm if}\quad \la\not\in\N,\\[2.5pt]
\ds \deriv[m]{}{z}\left\{\big[1+\erf(z)\big]\exp(z^2)\right\}, &\mbox{\rm if}\quad \la=m\in\N,
\end{cases}\end{equation*}
with $D_{\nu}(\zeta)$ is the {parabolic cylinder function} and $\erfc(z)$ the complementary error function
\begin{equation}\label{def:erfc}
\erfc (z)=\frac{2}{\sqrt{\pi}} \int_z^\infty \exp (-t^2) \,dt,
\end{equation}
Then coefficients $\a_n(t;\la)$ and $\b_n(t;\la)$ in the recurrence relation \eqref{eq:scLag2}
associated with the semi-classical Laguerre weight \eqref{laguerreweight} are given by
\begin{subequations}\label{eq:scLag31ab}\begin{align}\label{eq:scLag31a}
\a_n(t;\la)&=\tfrac12q_n(z)+\tfrac12t,\\ \label{eq:scLag31b}
\b_n(t;\la)&=-\tfrac18\deriv{q_n}z-\tfrac18 q_n^2(z) -\tfrac14zq_n(z)+\tfrac14\la+\tfrac12n,\end{align}\end{subequations}
with $z=\tfrac12t$, where
\begin{equation*}\label{eq:PT.SF.eq410}q_n(z)=-2z+\deriv{}{z}\ln\frac{\Psi_{n+1,\la}(z)}{\Psi_{n,\la}(z)},\end{equation*}
which satisfies \PIV\ \eqref{eq:PIV} with parameters $(A,B)=(2n+\la+1,-2\la^2)$.
}\end{theorem}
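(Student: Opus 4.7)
The plan is to follow the Hankel-determinant and Toda-system route, with the crucial input being an explicit evaluation of the zeroth moment $\mu_0(t)$ in terms of parabolic cylinder functions. Since the weight \eqref{laguerreweight} contains the factor $\exp(tx)$, the moments satisfy $\mu_k(t)=\int_0^\infty x^{k+\la}\exp(-x^2+tx)\,dx=\deriv[k]{\mu_0}{t}$, so the Hankel determinant $\Delta_n(t)=\det[\mu_{j+k}]_{j,k=0}^{n-1}$ admits the Wronskian representation $\Delta_n(t)=\W(\mu_0,\mu_0',\ldots,\mu_0^{(n-1)})$ in the variable $t$. The standard identities
\[
\a_n(t;\la)=\deriv{}{t}\ln\frac{\Delta_{n+1}(t)}{\Delta_n(t)},\qquad \b_n(t;\la)=\deriv[2]{}{t}\ln\Delta_n(t),
\]
which follow from the moment-matrix derivation of the three-term recurrence (as recorded in \cite{refCJ}), then reduce the problem to an explicit evaluation of $\mu_0$.

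Next, I would use the standard integral representation of $D_\nu$, after completing the square in the exponent and substituting $z=\tfrac12t$, to identify $\mu_0(t)$ up to a $t$-independent $\la$-factor with $\psi_\la(z)=D_{-\la-1}(-\sqrt{2}\,z)\exp(\tfrac12z^2)$. For $\la=m\in\N$, the parabolic cylinder function degenerates and one rewrites the moment via repeated integration by parts (or via the classical relation between $D_{-m-1}$ and derivatives of $\erfc$) to obtain the alternative $\deriv[m]{}{z}\{[1+\erf(z)]\exp(z^2)\}$; the two branches in the definition of $\psi_\la$ are thus consistent, and the $\la$-prefactor drops out after taking logarithmic derivatives. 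Changing variables from $t$ to $z=\tfrac12t$ inside the Wronskian yields $\Delta_n(t)=c_n(\la)\,\Psi_{n,\la}(z)$ for some nonzero $c_n(\la)$ independent of $z$, which again cancels in $\a_n$ and $\b_n$.

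Substituting this into the formulas for $\a_n$ and $\b_n$ and defining $q_n(z)=-2z+(d/dz)\ln[\Psi_{n+1,\la}(z)/\Psi_{n,\la}(z)]$, the identity \eqref{eq:scLag31a} follows immediately after accounting for the factor $\tfrac12$ from $z=\tfrac12 t$; \eqref{eq:scLag31b} follows by expanding $\deriv[2]{}{t}\ln\Delta_n$ and simplifying using a Jacobi-type bilinear identity $\Delta_{n+1}\Delta_{n-1}=\Delta_n^2\b_n$ (the Toda-chain $\tau$-function structure). Finally, the assertion that $q_n$ satisfies \PIV\ with $(A,B)=(2n+\la+1,-2\la^2)$ follows from Okamoto's description of the special-function solutions of \PIV\ as ratios of Wronskian $\tau$-functions built from a parabolic-cylinder seed: iterating the \bk\ transformation $n$ times on the seed $\psi_\la$ produces precisely the Wronskians $\Psi_{n,\la}$, and the shift rule for the \PIV\ parameters under this transformation fixes $(A,B)$ to the stated values.

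The main obstacle I anticipate is the bookkeeping in the middle step: carefully tracking the exponential and Gamma-factor prefactors arising from the integral representation of $\mu_0$ and from the rescaling $t\mapsto z=\tfrac12t$, so that what remains after logarithmic differentiation is exactly the Wronskian $\Psi_{n,\la}$ appearing in the statement, with the right normalization in both the generic ($\la\notin\N$) and degenerate ($\la=m\in\N$) cases. The \PIV\ identification in the last step is essentially an invocation of known Okamoto theory, but care is required to match conventions for $(A,B)$ with those in \eqref{eq:PIV}.
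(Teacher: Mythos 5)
The paper offers no proof of this theorem at all---it simply defers to \cite{refCJ}---and your outline (moments of the weight as $t$-derivatives of $\mu_0$, the Hankel determinant $\Delta_n$ as a Wronskian of $\mu_0$, evaluation of $\mu_0$ via the integral representation of the parabolic cylinder function, and identification with Okamoto's Wronskian $\tau$-functions for the special-function solutions of \PIV) is precisely the route taken in that reference, so your approach is essentially the paper's. The one step you state too lightly is the passage from $\b_n=\deriv[2]{}{t}\ln\Delta_n$ to \eqref{eq:scLag31b}: the Jacobi/Toda bilinear identity alone does not produce an expression in $q_n$ and $\deriv{q_n}{z}$---you additionally need the Okamoto--Hamiltonian relation expressing the logarithmic derivative of the $\tau$-function $\Psi_{n,\la}$ (the \sPIV\ $\sigma$-function) as a quadratic expression in $q_n$ and its derivative, which is what converts $\tfrac14\deriv[2]{}{z}\ln\Psi_{n,\la}$ into the stated formula.
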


\begin{proof}See \cite{refCJ}. \end{proof}

\subsection{Symmetrization of semi-classical Laguerre weight}
In this section we show that symmetrizing the semi-classical Laguerre weight \eqref{laguerreweight} gives rise to the generalized Freud weight \eqref{genFreud}.

Let $\lbrace L^{(\la)}_n(x;t)\rbrace$ denote the monic semi-classical Laguerre polynomials orthogonal with respect to
semi-classical Laguerre weight \eqref{laguerreweight} with \begin{align*}
 \int_0^{\infty} L^{(\la)}_m(x;t)  L^{(\la)}_n(x;t)
 \, x^{\la}\exp(-x^2+tx) \,d{x} = K_n(t)\, \delta_{mn}.
\end{align*}

Define
\begin{align}\label{quadratic}
S_{2m}(x;t) =L^{(\la)}_m(x^2;t); \quad S_{2m+1}(x;t) =xQ^{(\la)}_n(x^2;t),
\end{align}
where \begin{align}\label{Kernelsym}
Q^{(\la)}_n(x;t)= \frac{1}{x}\left[ L^{(\la)}_{n+1}(x;t) - \dfrac{L^{(\la)}_{n+1}({0;t})}{L^{(\la)}_n({0;t})} L^{(\la)}_n(x;t) \right],
\end{align}
are also monic and of degree $n$, for $x\neq0$.

 Then
\begin{align*}
 \int_0^{\infty} L^{(\la)}_m(x;t) L^{(\la)}_n(x;t) x^{\la}\exp(-x^2+tx) \,d{x} 
 & = 2\int_0^{\infty} L^{(\la)}_m(x^2;t)  L^{(\la)}_n(x^2;t) x^{2\la+1} \exp\left(-x^4+tx^2\right) d{x}
 \\& = \int_{-\infty}^{\infty} L^{(\la)}_m(x^2;t)  L^{(\la)}_n(x^2;t) |x|^{2\la+1}  \exp\left(-x^4+tx^2\right) d{x} 
 \\&= \int_{-\infty}^{\infty}S_{2m}(x;t)  S_{2n}(x;t)  |x|^{2\la+1}\exp\left(-x^4+tx^2\right) d{x}\\&=K_n(t) \delta_{mn}.
\end{align*}
which implies that
 $\lbrace S_{2m}(x;t) \rbrace_{m=0}^{\infty}$ is a symmetric orthogonal sequence with respect to the even weight
 \begin{equation*} \ww{x} =|x|^{2\la+1}\exp (-x^4 +tx^2),\end{equation*} on $\R$, i.e.\ the generalized Freud weight \eqref{genFreud}.

 It is proved in \cite[Thm 7.1]{refChihara} that the kernel polynomials $Q_{m}^L(x;t)$
are orthogonal with respect to $x \ww{x}= x^{\la+1}\exp(-x^2+tx)$. Hence
\begin{align*}K_n(t) \delta_{mn}&=
\int_0^{\infty} Q^{(\la)}_m(x;t)  Q^{(\la)}_n(x;t)  x^{\la+1}\exp(-x^2+tx) \,d{x} \\&=
 2\int_0^{\infty} Q^{(\la)}_m(x^2;t)  Q^{(\la)}_n(x^2;t)  x^{2\la+3}\exp\left(-x^4+tx^2\right) d{x}
 \\&=
 \int_{-\infty}^{\infty} \left[ xQ^{(\la)}_m(x^2;t)\right)]\left[ xQ^{(\la)}_n(x^2;t)\right] |x|^{2\la+1}\exp\left(-x^4+tx^2\right) d{x}
 \nonumber\\&=
 \int_{-\infty}^{\infty} S_{2m+1}(x;t) S_{2n+1}(x;t)  |x|^{2\la+1}\exp\left(-x^4+tx^2\right)d{x}.
\end{align*}
Lastly, since in each case the integrand is odd, we have that
\begin{align*}
\int_{-\infty}^{\infty} S_{2m+1}(x;t) S_{2n}(x;t)   |x|^{2\la+1}\exp\left(-x^4+tx^2\right) d{x}=0,\qquad m,n\in\N,\end{align*} 
and so we conclude that $\lbrace S_{n}(x;t) \rbrace_{n=0}^{\infty}$ is a sequence of polynomials orthogonal with respect to the even weight \eqref{genFreud} on $\R.$

\comment{Note that
\begin{align*}\label{dual}
 \widetilde{\w}(x;t) = |x|^{-1} w(x^2) = |x|^{2\la -1} \exp(-x^4 +tx^2).
 \end{align*} is another symmetric dual weight function for the semi-classical Laguerre weight (cf.~\cite{Masjed}).}

\section{The generalized Freud weight}\label{sec:genFpoly}
\subsection{The moments of the generalized Freud weight}
It is shown in \cite{refCJ} that the moments of the semi-classical weight provide the link between the weight and
the associated \p\ equation. The recurrence coefficients in the three-term recurrence relations associated with semi-classical orthogonal polynomials can often be expressed in terms of solutions of the \p\ equations and associated discrete \p\ equations.

For the generalized Freud weight \eqref{genFreud},
the first moment, $\mu_0(t;\la)$, can be obtained using the integral representation of a parabolic cylinder (Hermite-Weber) function $\WhitD{v}(\xi)$. By definition
\[\begin{split}
\mu_0(t;\la)&=\int_{-\infty}^{\infty}|x|^{2\la+1}\exp\left(-x^4+tx^2\right)dx\\
&=2\int_{0}^{\infty}x^{2\la+1}\exp\left(-x^4+tx^2\right)dx\\
&=\int_{0}^{\infty}y^{\la}\exp\left(-y^2+ty\right)dy\\
&= \frac{\Gamma(\la+1)}{2^{(\la+1)/2}}\,\exp\left(\tfrac18t^2\right)\WhitD{-\la-1}\big(-\tfrac12\sqrt2\,t\big).
\end{split}\]
since the parabolic cylinder function $\WhitD{v}(\xi)$ has the integral representation \cite[\S12.5(i)]{DLMF}
\[\WhitD{v}(\xi) = \dfrac{\exp(-\tfrac{1}{4}\xi^2)}{\Gamma(-v)} \int_{0}^{\infty} s^{-v-1} \, \exp\left( -\tfrac{1}{2}s^2 -\xi s \right) d{s},\qquad \Re(\nu)<0.
\]
The even moments are
\begin{align*}
\mu_{2n}(t;\la) &= \int_{-\infty}^{\infty} x^{2n}\, |x|^{2\la+1}\exp\left(-x^4+tx^2\right) d{x} \\&=
 \deriv[n]{}{t}\left(\int_{-\infty}^{\infty}|x|^{2\la+1}\exp\left(-x^4+tx^2\right) d{x} \right),\nonumber\\&
 = \deriv[n]{}{t}\mu_0(t;\la), \quad n\geq1,
\end{align*}
whilst the odd ones are
\begin{align*}
\mu_{2n+1}(t;\la) &= \int_{-\infty}^{\infty} x^{2n+1}\, |x|^{2\la+1}\exp\left(-x^4+tx^2\right) d{x} =0, \quad n\in\N,
\end{align*}
since the integrand is odd.

We note that
\comment{\[\begin{split}\deriv{}{t}\mu_0(t;\la) &= \frac{\Gamma(\la+1)}{2^{(\la+1)/2}}\,\exp\left(\tfrac18t^2\right)\left\{\tfrac12t\WhitD{-\la-1}\big(-\tfrac12\sqrt2\,t\big)
\right. \\ &\qquad\qquad+\left. \tfrac12\sqrt{2}\,\WhitD{-\la}\big(-\tfrac12\sqrt2\,t\big)\right\} \\
&= \frac{\Gamma(\la+2)}{2^{(\la+2)/2}}\,\exp\left(\tfrac18t^2\right)\WhitD{-\la-2}\big(-\tfrac12\sqrt2\,t\big)\\
& =\mu_0(t;\la+1),
\end{split}\]
since
\[\begin{split}
&\deriv{}{z}\WhitD{\nu}(z)=\tfrac12z\WhitD{\nu}(z)-\WhitD{\nu+1}(z),\\
&\WhitD{\nu+1}(z)-z\WhitD{\nu}(z)+\nu\WhitD{\nu-1}(z)=0.\end{split}\]}%
\begin{align*}
\mu_{2n}(t;\la) &= \int_{-\infty}^{\infty} x^{2n}\, |x|^{2\la+1}\exp\left(-x^4+tx^2\right) d{x}\\
&= \int_{-\infty}^{\infty} |x|^{2\la+2n+1}\exp\left(-x^4+tx^2\right) d{x}\\
&= \mu_0(t;\la+n).
\end{align*}
Also, when $\la=n\in\N$, then
\[\WhitD{-n-1}\big(-\tfrac12\sqrt2\,t\big)=\tfrac12\sqrt{2\pi}\,\deriv[n]{}{t}\left\{\left[1+\erf\left(\tfrac12t\right)\right]\exp\left(\tfrac18t^2\right)\right\},\]
where $\erf(z)$ is the error function \cite[\S12.7(ii)]{DLMF}.

\subsection{Recurrence coefficients of the generalized Freud polynomials}
Monic orthogonal polynomials with respect to the symmetric generalized Freud weight \eqref{genFreud} satisfy the three-term recurrence relation
\begin{equation}xS_n(x;t)=S_{n+1}(x;t)+\b_{n}(t;\la)S_{n-1}(x;t),\label{eq:gfrr}\end{equation} 
where $S_{-1}(x;t)=0$ and $S_0(x;t)=1.$

Our interest is to determine explicit expressions for the recurrence coefficients $\b_n(t;\la)$ in the three-term recurrence relation \eqref{eq:gfrr}. First we relate them to solutions of \PIV\ \eqref{eq:PIV}.

\begin{lemma}{The recurrence coefficients $\b_n(t;\la)$ in \eqref{eq:gfrr} satisfy the equation
\begin{equation}\label{eq:bn}
\deriv[2]{\b_n}{t}=\frac{1}{2\b_n}\left(\deriv{\b_n}{t}\right)^2+\tfrac32\b_n^3-t\b_n^2+(\tfrac18 t^2-\tfrac12A_n)\b_n+\frac{B_n}{16\b_n},\end{equation}
where the parameters $A_n$ and $B_n$ are given by
\begin{equation*}\begin{array}{l@{\quad}l}A_{2n} = -2\la-n-1, & A_{2n+1}=\la-n,\\
B_{2n} = -2n^2, & B_{2n+1}=-2(\la+n+1)^2.\end{array}\end{equation*}
Further $\b_n(t;\la)$ satisfies the nonlinear difference equation
\begin{equation}\label{eq:dPI}
\b_{n+1}+\b_n+\b_{n-1}=\tfrac12t+\frac{2n+(2\la+1)[1-(-1)^n]}{8\b_n},\end{equation}
which is known as discrete \PI\ (\dPI).
}\end{lemma}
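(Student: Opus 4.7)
My approach is to prove the two assertions sequentially. I would first establish the discrete Painlev\'e equation \eqref{eq:dPI} by a direct integration-by-parts argument, next derive the Toda-type evolution $\deriv{\b_n}{t}=\b_n(\b_{n+1}-\b_{n-1})$ by differentiating $h_n(t)$ in $t$, and finally combine the two to eliminate $\b_{n\pm1}$ and produce the \PIV-like second-order ODE \eqref{eq:bn}.

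For \eqref{eq:dPI}, I would begin from the vanishing boundary identity $\int_{\R}[S_n(x;t)\,S_{n-1}(x;t)\,\ww{x}]'\,dx=0$. Integration by parts gives $n\,h_{n-1}=-\int_{\R} S_n S_{n-1}\,\partial_x\ww{x}\,dx$, since in the orthogonal expansion $\deriv{S_n}{x}=n\,S_{n-1}+(\text{lower order})$ and $\deriv{S_{n-1}}{x}$ has degree at most $n-2$. Using $\partial_x\ww{x}/\ww{x}=(2\la+1)/x+2tx-4x^3$ I would then evaluate the three resulting moments: Lemma~\ref{lemmaeven} gives $\int_{\R} S_nS_{n-1}\,\ww{y}/y\,dy=\tfrac12[1-(-1)^n]\,h_{n-1}$, the recurrence \eqref{eq:gfrr} gives $\int_{\R} y\,S_n S_{n-1}\,\ww{y}\,dy=\b_n h_{n-1}$, and three iterations of \eqref{eq:gfrr} give $\int_{\R} y^3\,S_n S_{n-1}\,\ww{y}\,dy=\b_n(\b_{n-1}+\b_n+\b_{n+1})h_{n-1}$. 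Balancing and solving for the sum yields \eqref{eq:dPI}.

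The Toda relation follows by differentiating $h_n(t)=\int_{\R} S_n^2\,\ww{x}\,dx$ in $t$: only $\partial_t\ww{x}=x^2\ww{x}$ contributes by orthogonality, and expanding $x^2 S_n$ via \eqref{eq:gfrr} produces $\deriv{h_n}{t}=(\b_n+\b_{n+1})h_n$; subtracting the same identity at index $n-1$ and using $\b_n=h_n/h_{n-1}$ gives the desired Toda equation. Writing $c_n=2n+(2\la+1)[1-(-1)^n]$ for the numerator in \eqref{eq:dPI}, the pair \eqref{eq:dPI} and Toda expresses $\b_{n\pm1}$ in closed form in $\b_n$, $\deriv{\b_n}{t}$ and $t$. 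Differentiating Toda once more yields $\deriv[2]{\b_n}{t}=(\deriv{\b_n}{t})^2/\b_n+\b_n(\deriv{\b_{n+1}}{t}-\deriv{\b_{n-1}}{t})$; I would apply Toda at $n\pm1$ to rewrite the right-hand side, eliminate the resulting $\b_{n\pm2}$ via \eqref{eq:dPI} at $n\pm1$, and substitute the closed-form expressions for $\b_{n\pm1}$. After simplification---requiring the identity $c_{n+1}+c_{n-1}-c_n=2n+(2\la+1)[1+3(-1)^n]$---the result collapses to \eqref{eq:bn} with $A_n=-\tfrac14[2n+(2\la+1)(1+3(-1)^n)]$ and $B_n=-c_n^2/8$; evaluating by parity of $n$ recovers the stated values $A_{2n}=-2\la-n-1$, $A_{2n+1}=\la-n$, $B_{2n}=-2n^2$ and $B_{2n+1}=-2(\la+n+1)^2$.

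The principal obstacle is the bookkeeping in this final elimination. One must track the parity-dependent factor $(-1)^n$ through the shifts $n\to n\pm1$, verify that the $1/\b_{n\pm1}$ singularities produced by the shifted \dPI\ equations cancel after substitution, and check that the two unified formulas for $A_n$ and $B_n$ reduce correctly to the claimed four values. The integration-by-parts computation for \eqref{eq:dPI} and the derivation of Toda are routine.
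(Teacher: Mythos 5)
Your argument is sound and, modulo the routine algebra you flag, complete; I verified the final elimination and it does collapse to \eqref{eq:bn} with $A_n=-\tfrac14\left[c_{n+1}+c_{n-1}-c_n\right]$ and $B_n=-c_n^2/8$, which reduce correctly to the four stated values. Note, however, that the paper does not prove this lemma at all: it simply cites \cite[Theorem 6.1]{refFvAZ}, so any self-contained derivation departs from the text. What you supply is essentially the standard proof underlying that reference: (i) the string equation \eqref{eq:dPI} from $\int_{\R}\frac{d}{dx}\left[S_nS_{n-1}w\right]dx=0$ together with the three moment evaluations (all correct; the $1/x$ term is exactly \eqref{int22} with $k=0$, and the $y^3$ moment does equal $\b_n(\b_{n-1}+\b_n+\b_{n+1})h_{n-1}$); (ii) the Toda flow $\b_n'=\b_n(\b_{n+1}-\b_{n-1})$ from $t$-differentiation of $h_n$, using that $\partial_tS_n$ has degree at most $n-1$ because $S_n$ is monic; (iii) elimination. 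Step (iii) is precisely what the paper records, without proof, in the Remark following the lemma: the pair \eqref{def:bnp}--\eqref{def:bnm} is nothing but your \dPI\ and Toda relations solved simultaneously for $\b_{n\pm1}$, and the paper's prescription (shift $n\to n+1$ in \eqref{def:bnm}, then substitute \eqref{def:bnp}) is an equivalent but slightly leaner organization of your elimination, since it never introduces $\b_{n\pm2}$ or the constant $c_{n-1}$; your symmetric version costs a little more bookkeeping but makes the parity structure of $A_n$ transparent. The only points deserving an explicit sentence in a write-up are the behaviour at $x=0$ when $-1<\la<0$: the boundary terms and the $(2\la+1)S_nS_{n-1}/x$ integrand are harmless only because $S_nS_{n-1}$ is an odd polynomial and hence vanishes at the origin, a fact you use implicitly via Lemma \ref{lemmaeven}.
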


\begin{proof}See, for example, \cite[Theorem 6.1]{refFvAZ}.\end{proof}

\begin{remarks}{\rm\qquad\phantom{x}
\begin{enumerate}
\item Equation \eqref{eq:bn} is equivalent to \PIV\ \eqref{eq:PIV} through the transformation $\b_n(t;\la)=\tfrac12 w(z)$, with $z=-\tfrac12t$. Hence
\begin{subequations}\label{bnp4} \begin{align}
\b_{2n}(t;\la)&=\tfrac12 w\big(z;-2\la-n-1,-2n^2\big),\\
\b_{2n+1}(t;\la)&=\tfrac12 w\big(z;\la-n,-2(\la+n+1)^2\big),
\end{align}\end{subequations}
with $z=-\tfrac12t$,
where $w(z;A,B)$ satisfies \PIV\ \eqref{eq:PIV}. The conditions on the \PIV\ parameters in \eqref{bnp4} are precisely those for which \PIV\ has \eqref{eq:PIV} has solutions expressible in terms of the parabolic cylinder function
\[\psi(z)=\mu_0(-2z;\la)=\frac{\Gamma(\la+1)}{2^{(\la+1)/2}}\,\exp\left(\tfrac12z^2\right)\WhitD{-\la-1}\big(\sqrt2\,z\big),\]
\cite{refFW01,refOkamotoPIIPIV}; see also \cite[Theorem 3.5]{refCJ}.

\item The link between the differential equation \eqref{eq:bn} and the difference equation \eqref{eq:dPI} is given by the B\"{a}cklund transformations
\begin{subequations}
\begin{align}
\b_{n+1}&=\frac{1}{2\b_n}\deriv{\b_n}{t}-\tfrac12\b_n+\tfrac14t+\frac{\ga_n}{4\b_n},\label{def:bnp}\\
\b_{n-1}&=-\frac{1}{2\b_n}\deriv{\b_n}{t}-\tfrac12\b_n+\tfrac14t+\frac{\ga_n}{4\b_n},\label{def:bnm}
\end{align}\end{subequations}
with $\ga_n=\tfrac12n+\tfrac14(2\la+1)[1-(-1)^n]$.
Letting $n\to n+1$ in \eqref{def:bnm} and substituting \eqref{def:bnp} gives the differential equation \eqref{eq:bn}, whilst
eliminating the derivative yields the difference equation \eqref{eq:dPI}.
\end{enumerate}
}\end{remarks}

\begin{lemma}{The recurrence coefficients $\b_n(t;\la)$ in \eqref{eq:gfrr} are given by
\begin{subequations}
\begin{align}
\b_{2n}(t;\la) &=\deriv{}{t}\ln\frac{\tau_n(t;\la+1)}{\tau_n(t;\la)},\\
\b_{2n+1}(t;\la) &=\deriv{}{t}\ln\frac{\tau_{n+1}(t;\la)}{\tau_n(t;\la+1)},
\end{align}\end{subequations}
where $\tau_n(t;\la)$ is the Wronskian given by
\begin{align}\label{taun}
\tau_n(t;\la)&=\mathcal{W}\left(\phi_{\la},\deriv{\phi_{\la}}{t},\ldots,\deriv[n-1]{\phi_{\la}}{t}\right),
\end{align}
with \begin{equation*}
\phi_{\la}(t)=\mu_0(t;\la)=\frac{\Gamma(\la+1)}{2^{(\la+1)/2}}\,\exp\left(\tfrac18t^2\right)\WhitD{-\la-1}\big(-\tfrac12\sqrt2\,t\big).
\end{equation*}
}\end{lemma}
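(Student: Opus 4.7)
The plan is to reduce the lemma to identities about the recurrence coefficients of the semi-classical Laguerre polynomials by exploiting the symmetrization established in Section~\ref{sec:scLag}, and then to invoke the tau-function representation of those Laguerre coefficients that follows from the preceding theorem on $\a_n(t;\la)$ and $\b_n(t;\la)$.

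First I would observe that the kernel polynomials $Q_m^{(\la)}$ are, by uniqueness of monic orthogonal polynomials, precisely the semi-classical Laguerre polynomials $L_m^{(\la+1)}$ with shifted parameter, since both are monic and orthogonal with respect to $y^{\la+1}\exp(-y^2+ty)$ on $\R^+$. Consequently $S_{2m}(x;t)=L_m^{(\la)}(x^2;t)$ and $S_{2m+1}(x;t)=xL_m^{(\la+1)}(x^2;t)$. Writing the three successive recurrences for $S_{2m}$, $S_{2m+1}$ and $S_{2m+2}$ in terms of these and setting $y=x^2$ gives three contiguous relations among Laguerre polynomials at parameters $\la$ and $\la+1$. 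Substituting two of them into the third and matching with the three-term recurrence for $L_m^{(\la+1)}$ yields
\[\a_m^L(t;\la+1)=\b_{2m+1}(t;\la)+\b_{2m+2}(t;\la),\]
while the parallel manipulation, using the kernel identity $yQ_k^{(\la)}=L_{k+1}^{(\la)}-c_kL_k^{(\la)}$ with $c_k=L_{k+1}^{(\la)}(0;t)/L_k^{(\la)}(0;t)=-\b_{2k+1}(t;\la)$, gives the companion identity
\[\a_m^L(t;\la)=\b_{2m}(t;\la)+\b_{2m+1}(t;\la).\]

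Next I would use the preceding theorem to express $\a_n^L(t;\la)$ as a logarithmic derivative of Wronskians. With $z=\tfrac12 t$ the identification $\phi_\la(t)=\tfrac{\Gamma(\la+1)}{2^{(\la+1)/2}}\psi_\la(t/2)$ shows that $\Psi_{n,\la}(z)$ and $\tau_n(t;\la)$ agree up to a $t$-independent prefactor, so that $\a_n^L(t;\la)=\tfrac{d}{dt}\ln[\tau_{n+1}(t;\la)/\tau_n(t;\la)]$. Setting $U_n(\la):=\tfrac{d}{dt}\ln\tau_n(t;\la)$ and subtracting the two algebraic identities above produces
\[\b_{2m+2}(t;\la)-\b_{2m}(t;\la)=\bigl[U_{m+1}(\la+1)-U_m(\la+1)\bigr]-\bigl[U_{m+1}(\la)-U_m(\la)\bigr]=W_{m+1}-W_m,\]
where $W_m:=U_m(\la+1)-U_m(\la)=\tfrac{d}{dt}\ln[\tau_m(t;\la+1)/\tau_m(t;\la)]$. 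This telescopes, so $\b_{2m}(t;\la)-W_m$ is $m$-independent; since $\b_0=0$ and $W_0=0$ (because $\tau_0=1$), induction forces $\b_{2n}(t;\la)=W_n$, which is the first formula. The second follows at once from $\b_{2n+1}(t;\la)=\a_n^L(t;\la)-\b_{2n}(t;\la)=U_{n+1}(\la)-U_n(\la+1)=\tfrac{d}{dt}\ln[\tau_{n+1}(t;\la)/\tau_n(t;\la+1)]$.

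The main obstacle will be the careful bookkeeping needed to extract both algebraic identities that link $\a_m^L(t;\la)$ and $\a_m^L(t;\la+1)$ to consecutive pairs among the $\b_k(t;\la)$: the quadratic symmetrization, the kernel-polynomial definition, and both Laguerre three-term recurrences must all be combined consistently. Once these are secured, the rest is a straightforward telescoping argument, with the trivial initial condition $\b_0=0$ locking in the constant of integration.
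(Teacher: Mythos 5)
Your proof is correct, but it takes a genuinely different route from the paper's. The paper's argument is a parameter-matching one: it takes as given (from the preceding lemma, quoted from Filipuk--van Assche--Zhang) that $\b_n$ solves the \PIV-type equation \eqref{eq:bn} with the parameter values recorded in \eqref{bnp4}, lists the three families of Wronskian (parabolic cylinder) solutions \eqref{eqbsol} of that equation taken from \cite[Theorem 3.5]{refCJ}, and identifies $\b_{2n}$ and $\b_{2n+1}$ with the families $y^{[2]}_{n}$ and $y^{[3]}_{n+1}$ carrying the same parameters. You instead go through the symmetrization of \S\ref{sec:scLag}: the Chihara-type relations $\a^{L}_m(t;\la)=\b_{2m}+\b_{2m+1}$ and $\a^{L}_m(t;\la+1)=\b_{2m+1}+\b_{2m+2}$ (the latter using that the kernel polynomials are the semi-classical Laguerre polynomials with parameter $\la+1$), combined with the Wronskian formula $\a^{L}_n(t;\la)=\frac{d}{dt}\ln\bigl[\tau_{n+1}(t;\la)/\tau_n(t;\la)\bigr]$ that follows from \eqref{eq:scLag31a} after the change of variable $z=\tfrac12t$ and the observation that $\Psi_{n,\la}(t/2)$ and $\tau_n(t;\la)$ differ by a $t$-independent factor, and finally a telescoping argument anchored at $\b_0=0$, $\tau_0=1$. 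Both routes ultimately lean on \cite{refCJ}, but yours buys a point of rigour: a given \PIV\ parameter pair admits a whole family of special function solutions (the seed may be any solution of the underlying parabolic cylinder equation), so the paper's ``comparing \eqref{bnp4} and \eqref{eqbsol}'' step still implicitly requires selecting the particular solution seeded by $\phi_\la=\mu_0$; your telescoping fixes that constant of integration explicitly via the initial condition $\b_0=0$. The price is the combinatorial bookkeeping in the symmetrization identities, which are standard (cf.\ \cite{refChihara}) and which you correctly flag as the main labour; your low-order checks ($\b_0+\b_1=\Phi_\la$, $\b_1+\b_2=\Phi_{\la+1}$) are consistent with the explicit expressions listed in \S\ref{sec:genFpoly}.
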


\begin{proof}{From the parabolic cylinder solutions of \PIV\ \eqref{eq:PIV} given in \cite[Theorem 3.5]{refCJ}, it is easily shown that the equation
\begin{equation}\label{eqb}
\deriv[2]{y}{t}=\frac{1}{2y}\left(\deriv{y}{t}\right)^2+\tfrac32y^3-ty^2+(\tfrac18 t^2-\tfrac12A)y+\frac{B}{16y},\end{equation}
has the solutions $\left\{y_{n}^{[j]}\big(t;A_{n}^{[j]},B_{n}^{[j]}\big)\right\}_{j=1,2,3}$, 
\begin{subequations}\label{eqbsol}
\begin{align}
&y_{n}^{[1]}\big(t;\la+2n-1, -2\la^2\big)= \tfrac12t+\deriv{}{t}\ln\frac{\tau_{n-1}(t;\la)}{\tau_{n}(t;\la)},\\
&y_{n}^{[2]}\big(t;-2\la-n-1, -2n^2\big) =\deriv{}{t}\ln\frac{\tau_n(t;\la+1)}{\tau_n(t;\la)},\\
&y_{n}^{[3]}\big(t;\la-n+1, -2(\la+n)^2\big) =\deriv{}{t}\ln\frac{\tau_{n}(t;\la)}{\tau_{n-1}(t;\la+1)},
\end{align}\end{subequations}
where $\tau_n(t;\la)$ is the Wronskian \eqref{taun}. Comparing \eqref{bnp4} and \eqref{eqbsol} gives the desired result.
}\end{proof}

The first few recurrence coefficients $\b_n(t;\la)$ are given by
\[\begin{split}
\b_1(t;\la)&=\Ph,\\
\b_2(t;\la)&=-\frac{2\Ph^2-t\Ph-\la-1}{2\Ph},\\
\b_3(t;\la)&=-\frac{\Ph}{2\Ph^2-t\Ph-\la-1}-\frac{\la+1}{2\Ph},\\
\b_4(t;\la)&=\frac{t}{2(\la+2)}+\frac{\Ph}{2\Ph^2-t\Ph-\la-1} 
+\frac{(\la+1)(t^2+2\la+4)\Ph+(\la+1)^2t}{2(\la+2)[2(\la+2)\Ph^2-(\la+1) t\Ph-(\la+1)^2]},
\end{split}\]
where
\begin{align}\Ph(t)&=\deriv{}{t}\ln\left\{\WhitD{-\la-1}\big(-\tfrac12\sqrt2\,t\big)\exp\left(\tfrac18t^2\right)\right\}
=\tfrac12t+\tfrac12\sqrt{2}\,\frac{\WhitD{-\la}\big(-\tfrac12\sqrt2\,t\big)}{\WhitD{-\la-1}\big(-\tfrac12\sqrt2\,t\big)}.
\label{def:Phi}\end{align}
\comment{As $t\to \infty$
\begin{equation*}\Ph(t)=\tfrac12 t+\frac{\la}{t} +\frac{2\la(1-\la)}{t^3}+\frac{4\la(\la-1)(2\la-3)}{t^5}+\mathcal{O}\big(t^{-7}\big).\end{equation*}}
We note that
\[\Ph(t)=\deriv{}{t}\ln\phi_\la(t),\]
where $\phi_\la(t)=\exp(\tfrac18t^2)D_{-\la-1}\left(-\tfrac12\sqrt{2}\,t\right)$ satisfies
\begin{equation*}
\deriv[2]{\phi_\la}{t}-\tfrac12t\deriv{\phi_\la}{t}-\tfrac12(\la+1)\phi_\la=0,
\end{equation*}
and $\Ph(t)$ satisfies the Riccati equation
\begin{equation*}
\deriv{\Ph}{t}=-\Ph^2+\tfrac12t\Ph+\tfrac12(\la+1)=0,
\end{equation*}
see Lemma \ref{lem42}.

Using the recurrence relation \eqref{eq:gfrr}, the first few polynomials are
\[\begin{split}
S_1(x;t,\la)&={x},\\
S_2(x;t,\la)&=x^2-\Ph,\\
S_3(x;t,\la)&=x^3+\frac {t\Ph +\la+1}{2\Ph}\,x,\\
S_4(x;t,\la)&=x^4+{\frac {2t\Ph^{2}-({t}^{2}+2)\Ph -(\la+1) t}{ 2(2\Ph^{2}-t\Ph -\la-1)}}\,x^2 
-{\frac {2(\la+2)\Ph^2-(\la+1) t\Ph-(\la+1)^2}{2(2\Ph^{2}-t\Ph -\la-1)}},\\
S_5(x;t,\la)&=x^5-{\frac {2(\la+3)t\Ph^2-(\la+1) (t^2-2)\Ph-(\la+1)^2t}{4(\la+2)\Ph^2-2(\la+1) t\Ph-2(\la+1)^2}}\,x^3\\
&\qquad -{\frac {\big[2(\la+2)^2-t^2\big]\Ph^{2}-(\la+1)(\la+4)t\Ph -(\la+1)^2(\la+3)}{4(\la+2)\Ph^2-2(\la+1) t\Ph-2(\la+1)^2}}\,x.\\
\end{split}\]

\begin{lemma}{\label{lem42}The function $\Ph(t)$ defined by \eqref{def:Phi} satisfies the Riccati equation
\begin{equation} \label{eq:Phi}\deriv{\Ph}{t}=-\Ph^2+\tfrac12t\Ph+\tfrac12(\la+1),\end{equation}
and has the asymptotic expansion as $t\to\infty$ 
\begin{equation}\label{Phi:expan}\Ph(t)=\tfrac12t+\sum_{n=1}^{\infty}\frac{a_n}{t^{2n-1}},\end{equation}
where the constants $a_n$ are given by the nonlinear recurrence relation
\[a_{n+1}=2(2n-1)a_{n}-2\sum_{j=1}^{n}a_ja_{n+1-j},\]
with $a_1=\la$. In particular, as $t\to\infty$
\begin{equation}\label{Phasyn}
\Ph(t)=\tfrac12 t+\frac{\la}{t} +\frac{2\la(1-\la)}{t^3}+\frac{4\la(\la-1)(2\la-3)}{t^5}+\mathcal{O}\big(t^{-7}\big).\end{equation}
}\end{lemma}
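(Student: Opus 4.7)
The plan is to derive both parts from the single fact, recorded just above the lemma, that $\phi_\lambda(t)=\exp(\tfrac{1}{8}t^2)D_{-\lambda-1}(-\tfrac12\sqrt{2}\,t)$ satisfies the second order linear ODE $\phi_\lambda''-\tfrac12 t\phi_\lambda'-\tfrac12(\lambda+1)\phi_\lambda=0$, together with the observation that by construction $\Phi_\lambda=\phi_\lambda'/\phi_\lambda$ is the logarithmic derivative of $\phi_\lambda$.

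\textbf{Riccati equation.} Differentiating the identity $\Phi_\lambda=\phi_\lambda'/\phi_\lambda$ I get
\[
\deriv{\Phi_\lambda}{t}=\frac{\phi_\lambda''}{\phi_\lambda}-\left(\frac{\phi_\lambda'}{\phi_\lambda}\right)^{\!2}
=\frac{\phi_\lambda''}{\phi_\lambda}-\Phi_\lambda^{2}.
\]
Substituting $\phi_\lambda''=\tfrac12 t\phi_\lambda'+\tfrac12(\lambda+1)\phi_\lambda$ and dividing by $\phi_\lambda$ converts the first term into $\tfrac12 t\Phi_\lambda+\tfrac12(\lambda+1)$, which yields \eqref{eq:Phi} directly. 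This step is essentially immediate once the ODE for $\phi_\lambda$ is in hand.

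\textbf{Asymptotic expansion and recurrence.} The existence of an asymptotic series of the form \eqref{Phi:expan} can be read off from the standard large-argument expansion of $D_{-\lambda-1}(z)$ (see, e.g., DLMF \S12.9), combined with the factor $\exp(\tfrac18 t^2)$ and the logarithmic derivative, which produces a formal series in odd negative powers of $t$ plus the leading $\tfrac12 t$. Granting this structure, I insert the ansatz $\Phi_\lambda(t)=\tfrac12 t+\sum_{n\ge 1}a_n t^{1-2n}$ into the Riccati equation and equate coefficients of like powers of $t$. The derivative contributes $\tfrac12-\sum_{n\ge 1}(2n-1)a_n t^{-2n}$; the term $\tfrac12 t\Phi_\lambda$ contributes $\tfrac14 t^{2}+\tfrac12\sum_{n\ge 1}a_n t^{2-2n}$; and $\Phi_\lambda^{2}$ contributes $\tfrac14 t^{2}+\sum_{n\ge 1}a_n t^{2-2n}+\sum_{k\ge 2}\bigl(\sum_{j=1}^{k-1}a_j a_{k-j}\bigr)t^{2-2k}$. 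The $t^{2}$ terms cancel and the constant terms give $\tfrac12=-\tfrac12 a_1+\tfrac12(\lambda+1)$, hence $a_1=\lambda$. Matching the coefficient of $t^{-2n}$ (for $n\ge 1$) equates $-(2n-1)a_n$ with $-\tfrac12 a_{n+1}-\sum_{j=1}^{n}a_j a_{n+1-j}$, and solving for $a_{n+1}$ gives precisely the stated nonlinear recurrence.

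\textbf{Verification of the first few terms.} Starting from $a_1=\lambda$, the recurrence yields $a_2=2\cdot 1\cdot \lambda-2a_1^{2}=2\lambda(1-\lambda)$, then $a_3=2\cdot 3\cdot a_2-2(2a_1a_2)=4\lambda(\lambda-1)(2\lambda-3)$, in exact agreement with \eqref{Phasyn}. The main (minor) obstacle is justifying that \eqref{Phi:expan} really represents the asymptotic expansion of $\Phi_\lambda(t)$ rather than merely a formal solution of the Riccati equation; this is handled by appealing to the classical Poincaré-type asymptotic expansion of $D_\nu$ at infinity in the relevant Stokes sector, which transfers to $\Phi_\lambda$ by taking the logarithmic derivative term-by-term.
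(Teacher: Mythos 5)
Your proposal is correct and follows essentially the same route as the paper: both linearize the Riccati equation via the logarithmic-derivative substitution $\Phi_\lambda=\phi_\lambda'/\phi_\lambda$ (the paper merely runs this step in the reverse direction, solving the resulting linear ODE in parabolic cylinder functions and selecting the constants), and both obtain the recurrence for the $a_n$ by inserting the ansatz \eqref{Phi:expan} into \eqref{eq:Phi} and matching powers of $t$. Your closing remark justifying that the formal series is a genuine asymptotic expansion of $\Phi_\lambda$ addresses a point the paper leaves implicit, but otherwise the arguments coincide.
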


\begin{proof}Letting $\ds\Ph(t)=\deriv{}{t}\ln \phi_{\la}(t)$ in \eqref{eq:Phi} yields
\[\deriv[2]{\phi_{\la}}{t}-\tfrac12t\deriv{\phi_{\la}}{t}-\tfrac12(\la+1)\phi_{\la}=0,\]
which has solution
\[ \phi_{\la}(t)=\left\{C_1\WhitD{-\la-1}\big(-\tfrac12\sqrt2\,t\big)+C_2\WhitD{-\la-1}\big(\tfrac12\sqrt2\,t\big)\right\}\exp(\tfrac18t^2),\]
with $C_1$ and $C_2$ arbitrary constants. Hence setting $C_1=1$ and $C_2=0$ gives the solution \eqref{def:Phi} and shows that $\Ph(t)$ satisfies \eqref{eq:Phi}.

Substituting \eqref{Phi:expan} into \eqref{eq:Phi} gives
\begin{align*}
\sum_{n=1}^{\infty}\frac{(2n-1)a_n}{t^{2n}} &=\tfrac12t \sum_{n=1}^{\infty}\frac{a_n}{t^{2n-1}}
+\left(\sum_{n=1}^{\infty}\frac{a_n}{t^{2n-1}}\right)^2-\tfrac12\la\\
&=a_1-\la+\tfrac12\sum_{n=1}^{\infty}\frac{a_{n+1}}{t^{2n}}+\sum_{n=1}^{\infty}\frac{1}{t^{2n}}\sum_{j=1}^{n}a_ja_{n+1-j},
\end{align*}
hence, comparing coefficients of powers of $t$ gives $a_1=\la$ and
\[a_{n+1}=2(2n-1)a_{n}-2\sum_{j=1}^{n}a_ja_{n+1-j},\]
as required. Hence
\[\begin{split}a_1&=\la,\qquad a_2=-2\la( \la-1),\qquad a_3=4\la(\la-1)(2\la-3), 
\end{split}\]
which gives \eqref{Phasyn} as required.
 \end{proof}

\begin{lemma}{Let $h_n(t;\la)$ be defined by
\begin{equation}\label{def:Hn}
H_n(t;\la) = \deriv{}{t}\ln\tau_n(t;\la),
\end{equation}
where $\tau_n(t;\la)$ is the Wronskian given by
\[
\tau_n(t;\la)=\mathcal{W}\left(\phi_{\la},\deriv{\phi_{\la}}{t},\ldots,\deriv[n-1]{\phi_{\la}}{t}\right),\]
with \begin{equation*}
\phi_{\la}(t)=\frac{\Gamma(\la+1)}{2^{(\la+1)/2}}\,\exp\left(\tfrac18t^2\right)\WhitD{-\la-1}\big(-\tfrac12\sqrt2\,t\big).
\end{equation*}
Then $H_n(t;\la)$ satisfies the second-order, second-degree equation
\begin{align}
\left(\deriv[2]{H_n}{t}\right)^2&-\tfrac14\left(t\deriv{H_n}t-H_n\right)^2
+\deriv{H_n}t\left(2\deriv{H_n}t-n\right)\left(2\deriv{H_n}t-n-\la\right)=0.\label{eq:Hn}
\end{align}
}\end{lemma}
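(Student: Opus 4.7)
The idea is to recognise $H_n(t;\la)$, up to a linear rescaling of the independent and dependent variables, as an Okamoto $\sigma$-function for \PIV\ associated with the parabolic cylinder class of classical solutions, and then transcribe the Jimbo--Miwa--Okamoto $\sigma$-form of \PIV\ to the variables $(t,H_n)$.

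First I would compare the seed $\phi_\la(t)$ appearing in the present lemma with the seed $\psi_\la(z)$ used in the theorem of \S\ref{sec:scLag}: a direct substitution gives $\phi_\la(t)=\frac{\Gamma(\la+1)}{2^{(\la+1)/2}}\,\psi_\la(t/2)$. Propagating this identity through the Wronskian \eqref{taun}, and noting that $k$-fold differentiation in $t$ of $\psi_\la(t/2)$ contributes an overall factor $2^{-k}$ relative to $d^k/dz^k$ in $z=t/2$, shows that
\[\tau_n(t;\la)=\kappa_{n,\la}\,\Psi_{n,\la}(t/2),\qquad \kappa_{n,\la}\neq 0,\]
so that $H_n(t;\la)=\tfrac12\,\sigma_{n,\la}(t/2)$, where $\sigma_{n,\la}(z):=\deriv{}{z}\ln\Psi_{n,\la}(z)$.

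Next I would invoke Okamoto's Hamiltonian theory of \PIV. Because $\Psi_{n,\la}$ is the $\tau$-function built from the parabolic cylinder family of classical solutions of \PIV\ used in \cite{refCJ}, the Hamiltonian $\sigma_{n,\la}(z)$ satisfies the Jimbo--Miwa--Okamoto $\sigma$-form of \PIV,
\[\left(\deriv[2]{\sigma_{n,\la}}{z}\right)^{2}-4\left(z\deriv{\sigma_{n,\la}}{z}-\sigma_{n,\la}\right)^{2}+4\deriv{\sigma_{n,\la}}{z}\left(\deriv{\sigma_{n,\la}}{z}-2n\right)\left(\deriv{\sigma_{n,\la}}{z}-2(n+\la)\right)=0,\]
the parameters $(\theta_0,\theta_\infty)=(-n,-(n+\la))$ being read off from the $n$ Schlesinger shifts that carry $\psi_\la$ to $\Psi_{n,\la}$. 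Substituting back via $\sigma_{n,\la}(z)=2H_n(2z;\la)$, which yields $\sigma_{n,\la}'(z)=4\,\deriv{H_n}{t}$, $\sigma_{n,\la}''(z)=8\,\deriv[2]{H_n}{t}$ and $z\sigma_{n,\la}'-\sigma_{n,\la}=2(t\,\deriv{H_n}{t}-H_n)$, and dividing through by $64$, produces \eqref{eq:Hn} exactly.

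The main obstacle is correctly pinning down the pair $(\theta_0,\theta_\infty)$ in the $\sigma$-form, since the signs and normalisations are convention-dependent. The safest independent check is to compute $H_n(t;\la)$ for small $n$ directly from $\phi_\la$: for $n=1$ we have $\tau_1=\phi_\la$ and hence $H_1=\Ph$, which satisfies the Riccati equation \eqref{eq:Phi} of Lemma \ref{lem42}; substituting into \eqref{eq:Hn} reduces it to an algebraic identity in $\Ph$ and $t$ that is verified using \eqref{eq:Phi}. An analogous computation for $n=2$ (exploiting the Toda-type relations linking the $\b_k$ to differences of the $H$-values) then uniquely fixes the integer coefficients in the third term of \eqref{eq:Hn}, confirming the parameter identification above.
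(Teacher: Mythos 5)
Your proposal is correct and follows essentially the same route as the paper, which simply cites \cite[Theorem 4.11, eq.\ (4.15)]{refCJ} to identify \eqref{eq:Hn} with the \PIV\ $\sigma$-equation \eqref{eq:sPIV} and appeals to the classification of its parabolic-cylinder solutions in \cite{refFW01,refOkamotoPIIPIV}. Your explicit rescaling $H_n(t;\la)=\tfrac12\sigma_{n,\la}(t/2)$ (whose derivative bookkeeping and division by $64$ check out), together with the $n=1$ Riccati verification of the parameter choice, just makes transparent what the paper leaves to the citation.
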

\begin{proof}{Equation \eqref{eq:Hn} is equivalent to \sPIV, 
the \PIV\ $\sigma$-equation
\begin{equation}\label{eq:sPIV}
\left(\deriv[2]{\sigma}z\right)^{2} - 4\left(z\deriv{\sigma}z-\sigma\right)^{2} +4\deriv{\sigma}z\left(\deriv{\sigma}z+2\theta_0\right)
\left(\deriv{\sigma}z+2\theta_{\infty}\right)=0,\end{equation}
as shown in \cite[Theorem 4.11]{refCJ}. Equation \eqref{eq:Hn} is the same as equation (4.15) in \cite{refCJ}. Special function solutions of \sPIV\ \eqref{eq:sPIV} in terms of parabolic cylinder functions have been classified in \cite{refFW01,refOkamotoPIIPIV}; see also \cite[Theorem 3.5]{refCJ}.
}\end{proof}

We remark that equation \eqref{eq:sPIV}, and hence also equation \eqref{eq:Hn}, is equivalent to equation SD-I.c in the classification of second order, second-degree differential equations with the \p\ property by Cosgrove and Scoufis \cite{refCS}, an equation first derived and solved by Chazy \cite{refChazy11}.

\begin{lemma}{As $t\to\infty$, the recurrence coefficient $\b_n(t)$ has the asymptotic expansion
\begin{subequations}
\begin{align}
\b_{2n}(t;\la)&=\frac{n}{t}-\frac{2n(2\la-n+1)}{t^3}+\mathcal{O}\big(t^{-5}\big),\\ 
\b_{2n+1}(t;\la)&=\frac{t}{2}+\frac{\la-n}{t}-\frac{2(\la^2-4\la n+n^2-\la-n)}{t^3}+\mathcal{O}\big(t^{-5}\big),
\end{align}\end{subequations}
for $n\in\N$.
}\end{lemma}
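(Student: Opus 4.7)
I would proceed by induction on $n$ using the discrete Painlev\'e I recurrence \eqref{eq:dPI}, rewritten as
\begin{equation*}
\b_{n+1} = \tfrac12 t - \b_n - \b_{n-1} + \frac{c_n}{\b_n}, \qquad c_n := \frac{2n + (2\la+1)[1-(-1)^n]}{8},
\end{equation*}
with the two seed values $\b_0(t;\la) \equiv 0$ and $\b_1(t;\la) = \Ph(t)$. By Lemma \ref{lem42} the latter has the expansion $\Ph(t) = \tfrac12 t + \la/t + 2\la(1-\la)/t^3 + O(t^{-5})$, which, using $2\la(1-\la) = -2(\la^2 - \la)$, is precisely the claimed formula for $\b_{2k+1}$ at $k = 0$; the trivial case $\b_0 \equiv 0$ matches the even claim at $k = 0$. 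Thus both base cases hold.

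For the inductive step with $n = 2k$, $k \geq 1$, the recurrence determines $\b_{2k+1}$ in terms of $\b_{2k-1}$, $\b_{2k}$ and $c_{2k}/\b_{2k}$. Using the inductive expansion $\b_{2k} = k/t - 2k(2\la - k + 1)/t^3 + O(t^{-5})$ together with $c_{2k} = k/2$, a geometric-series inversion gives $c_{2k}/\b_{2k} = \tfrac12 t + (2\la - k + 1)/t + O(t^{-3})$. Combining this with the inductive expansion of $\b_{2k-1}$, the leading $\tfrac12 t$ contributions on the right-hand side collapse to $\tfrac12 t$, and collecting the $t^{-1}$ and $t^{-3}$ contributions produces the claimed form of $\b_{2k+1}$. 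The analogous step with $n = 2k+1$, using $c_{2k+1} = (k + \la + 1)/2$ and $1/\b_{2k+1} = 2/t - 4(\la - k)/t^3 + O(t^{-5})$, yields the claimed $\b_{2k+2}$. That the expansions contain only odd powers of $1/t$ is inherited from $\Ph$ and preserved by \eqref{eq:dPI} because $\tfrac12 t$ is odd, $c_n$ is $t$-independent, and reciprocals of odd-power Laurent series remain odd-power Laurent series.

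The main obstacle is purely the bookkeeping at order $t^{-3}$: to extract this coefficient of $\b_{n+1}$ one needs the expansion of $1/\b_n$ through two nontrivial terms, hence the expansions of $\b_{n-1}$ and $\b_n$ through order $t^{-5}$, so the inductive hypothesis must be strengthened by one order before it is used. An efficient way to organize the whole argument is to postulate the formal ansatz
\begin{equation*}
\b_{2k}(t;\la) = \sum_{j \geq 1} \frac{a_{2k,j}(\la)}{t^{2j-1}}, \qquad \b_{2k+1}(t;\la) = \tfrac12 t + \sum_{j \geq 1} \frac{a_{2k+1,j}(\la)}{t^{2j-1}},
\end{equation*}
substitute it into \eqref{eq:dPI}, and equate coefficients of $t^{1-2j}$ to obtain, at each fixed $j$, a two-step linear recurrence in $k$ for $a_{\cdot,j}(\la)$ whose inhomogeneity is polynomial in the previously determined $a_{m,\ell}$ with $\ell < j$. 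Solving this recurrence in closed form for $j = 1$ with initial data $a_{0,1} = 0$ and $a_{1,1} = \la$ gives $a_{2k,1} = k$, $a_{2k+1,1} = \la - k$, and the analogous computation at $j = 2$ produces $a_{2k,2} = -2k(2\la - k + 1)$ and $a_{2k+1,2} = -2(\la^2 - 4\la k + k^2 - \la - k)$, establishing the claimed expansions.
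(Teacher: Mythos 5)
Your argument is correct, but it takes a genuinely different route from the paper. The paper's proof is essentially a two-line substitution: it writes $\b_{2n}=H_n(t;\la+1)-H_n(t;\la)$ and $\b_{2n+1}=H_{n+1}(t;\la)-H_n(t;\la+1)$ with $H_n$ as in \eqref{def:Hn}, and then imports the expansion $H_n(t;\la)=\tfrac12nt+n\la/t+2n\la(n-\la)/t^3+\mathcal{O}(t^{-5})$ from \cite[Lemma 5.2]{refCJ}. You instead bootstrap the expansion from the string equation \eqref{eq:dPI} with seeds $\b_0\equiv0$ and $\b_1=\Ph$, using only Lemma \ref{lem42}; I have checked your base cases, the values $c_{2k}=k/2$ and $c_{2k+1}=(k+\la+1)/2$, the inversion of $\b_{2k}$ and $\b_{2k+1}$, and the order-$t^{-3}$ coefficients produced by your ansatz, and they all reproduce the stated formulas. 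Your approach is self-contained within this paper and algorithmically yields all higher-order terms, at the cost of more bookkeeping; the paper's is shorter but relies on an external asymptotic result for the Hamiltonian-type function $H_n$. Two points deserve explicit mention if you write this up. First, as you note, the coefficient of $t^{-3}$ in $\b_{n+1}$ requires the coefficient of $t^{-5}$ in $\b_n$ (through $c_n/\b_n$), and this dependence propagates: a naive induction truncated at order $t^{-3}$ does not close, so you genuinely need either the full formal-series ansatz you describe or an induction on the order $j$ of the expansion (for all $n$ simultaneously) rather than on $n$ alone. Second, to convert the formal manipulation into a statement about the actual functions you should observe that $\b_n=h_n/h_{n-1}>0$, so division by $\b_n$ is legitimate, and that \eqref{eq:dPI} expresses $\b_{n+1}$ as an explicit rational combination of $\b_n$, $\b_{n-1}$ and $t$, so genuine asymptotic expansions (guaranteed for the seeds by Lemma \ref{lem42}) propagate to all $n$.
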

\begin{proof}{In terms of the function $H_n(t;\la)$ defined by \eqref{def:Hn}, the recurrence coefficients are given as
\begin{subequations}\label{bn:asym}
\begin{align}\b_{2n}(t;\la)  &= H_n(t;\la+1)-H_n(t;\la),\\ \b_{2n+1}(t;\la)  &= H_{n+1}(t;\la)-H_n(t;\la+1).
\end{align}\end{subequations}
As $t\to\infty$, $H_n(t;\la)$ has the asymptotic expansion
\begin{equation}\label{asym:Hn}
H_n(t;\la)=\frac{nt}{2}+{\frac {n\la}{t}}+{\frac {2n\la(n- \la)}{{t}^{3}}}+\mathcal{O}\big(t^{-5}\big),
\end{equation} for $n\in\N$, see \cite[Lemma 5.2]{refCJ}; note that the functions $\Delta_n(t)$ and $S_n(t)$ in \cite{refCJ} are the same as our functions $\tau_n(t;\la)$ and $H_n(t;\la)$, respectively. Substituting \eqref{asym:Hn} in \eqref{bn:asym} immediately gives the result.
}\end{proof}

\subsection{The differential-difference equation satisfied by generalized Freud polynomials}\label{sec:diff-differential}
In this section we derive a differential-difference equation satisfied by generalized Freud polynomials using our results in \S\ref{sec:scOPs}.
\begin{lemma}{\label{lem31}For the generalized Freud weight \eqref{genFreud}
the monic orthogonal polynomials $S_{n}(x;t)$ satisfy
\begin{align}
 \int_{-\infty}^\infty \vvxy&{S_n^2(y;t)}\,\ww{y}\,dy 
 =4\big(x^2-\tfrac12t +\b_n+\b_{n+1}\big)h_n, \label{int1a}\\
\int_{-\infty}^\infty  \vvxy& {S_n(y;t)S_{n-1}(y;t)}\,\ww{y}\,dy = 4xh_n,\label{int1b}
\end{align} where 
\[\vvxy=\frac{\vx-\vy}{x-y},\] with
$v(x;t)=x^4-tx^2$ and
\begin{equation}\label{def:hn} h_n = \int_{-\infty}^\infty  {S_n^2(y;t)\ww{y}}\,dy.\end{equation}}\end{lemma}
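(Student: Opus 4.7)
The plan is to explicitly compute the kernel $\mathcal{K}(x,y)$ for the specific potential $v(x;t) = x^4 - tx^2$ and then evaluate the two integrals using only the orthogonality of $\{S_n\}$, the three-term recurrence \eqref{eq:gfrr}, and the identity $\beta_n = h_n/h_{n-1}$. Since $v'(x;t) = 4x^3 - 2tx$, a direct factorization gives
\[\mathcal{K}(x,y) = \frac{(4x^3-2tx) - (4y^3-2ty)}{x-y} = 4(x^2 + xy + y^2) - 2t,\]
so both integrands become polynomials in $y$ of low degree multiplied by $S_n^2$ or $S_nS_{n-1}$. The evenness of the weight $w(y;t)$ ensures that all odd-in-$y$ pieces integrate to zero, which will kill several terms immediately.

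For \eqref{int1a}, I would write the integral as $4x^2\int S_n^2 w\,dy + 4x\int y S_n^2 w\,dy + 4\int y^2 S_n^2 w\,dy - 2t\int S_n^2 w\,dy$. The first and last terms give $(4x^2-2t)h_n$; the middle term vanishes by parity since $y\,S_n^2\,w$ is odd. For the remaining integral I would write $\int y^2 S_n^2 w\,dy = \int (yS_n)^2 w\,dy$ and expand $yS_n = S_{n+1}+\beta_n S_{n-1}$ via \eqref{eq:gfrr}, giving $h_{n+1}+\beta_n^2 h_{n-1} = (\beta_{n+1}+\beta_n)h_n$ after using $\beta_n = h_n/h_{n-1}$. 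Collecting yields the claimed formula.

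For \eqref{int1b}, the same expansion reduces the problem to computing $\int S_nS_{n-1}w\,dy$, $\int y S_nS_{n-1}w\,dy$, and $\int y^2 S_nS_{n-1}w\,dy$. The first vanishes by orthogonality, and the third vanishes because $yS_n$ and $yS_{n-1}$ expand in disjoint orthogonal sets $\{S_{n+1},S_{n-1}\}$ and $\{S_n,S_{n-2}\}$. The middle integral equals $\int (S_{n+1}+\beta_nS_{n-1})S_{n-1}w\,dy = \beta_n h_{n-1} = h_n$, so multiplication by $4x$ produces $4xh_n$.

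The whole argument is mechanical, so there is no genuine obstacle; the only place to be careful is tracking the parity arguments and correctly identifying which cross-terms survive when $yS_n$ and $yS_{n-1}$ are expanded via the recurrence. In particular, the combination $\beta_n+\beta_{n+1}$ in \eqref{int1a} arises from the index shift $h_{n+1}=\beta_{n+1}h_n$ together with $\beta_n^2 h_{n-1}=\beta_n h_n$, and these two contributions must be added rather than telescoped, which is the only subtle bookkeeping point.
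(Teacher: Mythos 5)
Your proposal is correct and follows essentially the same route as the paper: compute $\mathcal{K}(x,y)=4(x^2+xy+y^2)-2t$ explicitly, then evaluate each piece by expanding $yS_n$ and $yS_{n-1}$ via the recurrence \eqref{eq:gfrr} and invoking orthogonality together with $\beta_n=h_n/h_{n-1}$. The only cosmetic difference is that you dispose of the $4x\int yS_n^2w\,dy$ term by parity while the paper uses orthogonality of $S_n$ against $S_{n+1}+\beta_nS_{n-1}$; both are valid and the bookkeeping you flag is handled identically.
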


\begin{proof}For the generalized Freud weight \eqref{genFreud} we have
\[\ww{x}=|x|^{2\la+1}\exp\left(-x^4+tx^2\right),\] i.e.\
$v(x;t)=x^4-tx^2$, and so
\[\vvxy= 4x^2+4xy+4y^2-2t.\]
Hence for \eqref{int1a}
\begin{align*} \int_{-\infty}^\infty  \vvxy {S_n^2(y;t)}\,\ww{y}\,dy &=
(4x^2-2t)\int_{-\infty}^\infty  {S_n^2(y;t)}\,\ww{y}\,dy \\&\qquad  + 4x\int_{-\infty}^\infty  {yS_n^2(y;t)}\,\ww{y}\,dy + 4\int_{-\infty}^\infty  {y^2S_n^2(y;t)}\,\ww{y}\,dy\\
&= (4x^2-2t)h_n +4x\int_{-\infty}^\infty  {S_n(y;t)}\big[S_{n+1}(y;t)+\b_nS_{n-1}(y;t)\big]\ww{y}\,dy\\ &\qquad
+ 4\int_{-\infty}^\infty  \big[S_{n+1}(y;t) + \b_nS_{n-1}(y;t)\big]^2\ww{y}\,dy\\
&= (4x^2-2t)h_n + 4h_{n+1}+ 4\b_n^2h_{n-1}\\
&= 4\big(x^2-\tfrac12t +\b_n+\b_{n+1}\big)h_n,
\end{align*}
since $\b_n=h_n/h_{n-1}$, the monic orthogonal polynomials $S_{n}(x;t)$ satisfy the three-term recurrence relation \eqref{eq:gfrr},
and are orthogonal, i.e.
\begin{equation}\int_{-\infty}^\infty  {S_m(y;t)}S_{n}(y;t)\ww{y}\,dy=0,\qquad{\rm if}\quad m\neq n.\label{Pnorth}\end{equation}
Also for \eqref{int1b}
\begin{align*} \int_{-\infty}^\infty  \vvxy &{S_n(y;t)S_{n-1}(y;t)}\,\ww{y}\,dy\nonumber\\ &=
(4x^2-2t) \int_{-\infty}^\infty  {S_n(y;t)S_{n-1}(y;t)}\,\ww{y}\,dy 
+ 4x \int_{-\infty}^\infty  {yS_n(y;t)S_{n-1}(y;t)}\,\ww{y}\,dy \\ &\qquad
+ 4 \int_{-\infty}^\infty  {y^2S_n(y;t)S_{n-1}(y;t)}\,\ww{y}\,dy \\
&= 4x \int_{-\infty}^\infty  S_{n}(y;t)\big[S_{n}(y;t)+\b_{n-1}S_{n-2}(y;t)\big] \ww{y}\,dy \\
&\qquad +4 \int_{-\infty}^\infty  \big[S_{n+1}(y;t)+\b_nS_{n-1}(y;t)\big] 
\big[S_{n}(y;t)+\b_{n-2}S_{n-2}(y;t)\big]\ww{y}\,dy\\
 &=4xh_n,
\end{align*}
using the recurrence relation \eqref{eq:gfrr} and orthogonality \eqref{Pnorth}.\end{proof}

\begin{theorem}{For the generalized Freud weight \eqref{genFreud}
the monic orthogonal polynomials $S_{n}(x;t)$ satisfy the differential-difference equation
\begin{equation}\label{eq:Snddeq}
x\deriv{S_n}{x}(x;t)=-B_n(x;t)S_n(x;t)+A_n(x;t)S_{n-1}(x;t),
\end{equation}
where
\begin{subequations}\label{AnBn}\begin{align}
A_n(x;t)&=4x\b_n(x^2-\tfrac12t+\b_n+\b_{n+1}),\\
B_n(x;t)&=4x^2\b_n+(\la+\tfrac12)[1-(-1)^n],
\end{align}\end{subequations}
with $\b_n$ the recurrence coefficient in the three-term recurrence relation \eqref{eq:gfrr}.}\end{theorem}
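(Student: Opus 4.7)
The plan is to apply Corollary \ref{cor:ABn} directly to the weight \eqref{genFreud}, which has the form $|x-k|^{\gamma}\exp\{-v(x;t)\}$ with $k=0$, $\gamma=2\lambda+1$, and $v(x;t)=x^4-tx^2$. Since $v$ is even and continuously differentiable, and (for $\lambda\geq 0$) $\gamma\geq 1$, the corollary applies and immediately yields the desired differential-difference equation \eqref{eq:Snddeq} with
\begin{align*}
A_n(x;t) &= \frac{x}{h_{n-1}}\int_{-\infty}^{\infty}S_n^2(y;t)\,\mathcal{K}(x,y)\,w(y;t)\,dy,\\
B_n(x;t) &= \frac{x}{h_{n-1}}\int_{-\infty}^{\infty}S_n(y;t)S_{n-1}(y;t)\,\mathcal{K}(x,y)\,w(y;t)\,dy+\tfrac12(2\lambda+1)[1-(-1)^n].
\end{align*}
Here the kernel simplifies to $\mathcal{K}(x,y) = 4(x^2+xy+y^2)-2t$ via the direct computation $v'(x;t)-v'(y;t)=4(x^3-y^3)-2t(x-y)$.

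Next I would invoke Lemma \ref{lem31}, which was established precisely for this setting and provides closed-form evaluations of these two integrals:
\begin{align*}
\int_{-\infty}^{\infty}\mathcal{K}(x,y)\,S_n^2(y;t)\,w(y;t)\,dy &= 4\bigl(x^2-\tfrac12 t+\beta_n+\beta_{n+1}\bigr)h_n,\\
\int_{-\infty}^{\infty}\mathcal{K}(x,y)\,S_n(y;t)S_{n-1}(y;t)\,w(y;t)\,dy &= 4x\,h_n.
\end{align*}

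Substituting these into the formulas for $A_n$ and $B_n$ and using the identity $\beta_n=h_n/h_{n-1}$ to absorb the normalization constants yields
$$A_n(x;t)=4x\beta_n\bigl(x^2-\tfrac12 t+\beta_n+\beta_{n+1}\bigr),\qquad B_n(x;t)=4x^2\beta_n+(\lambda+\tfrac12)[1-(-1)^n],$$
where I have used $\tfrac12(2\lambda+1)=\lambda+\tfrac12$. These match \eqref{AnBn}, completing the argument. No genuine obstacle appears in this final step, since the technical work has all been carried out upstream: Corollary \ref{cor:ABn} packages the ladder-operator calculation for even weights vanishing at a single point, while Lemma \ref{lem31} exploits the polynomial form of $v$ together with the three-term recurrence \eqref{eq:gfrr} and orthogonality of the $S_n$ to reduce the relevant moment integrals to simple closed forms in $\beta_n$ and $\beta_{n+1}$.
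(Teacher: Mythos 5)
Your proposal is correct and takes essentially the same route as the paper's own proof: apply Corollary \ref{cor:ABn} to the weight with $k=0$, $\gamma=2\lambda+1$ and $v(x;t)=x^4-tx^2$, then substitute the closed-form integrals from Lemma \ref{lem31} and use $\beta_n=h_n/h_{n-1}$. Your parenthetical observation that the hypothesis $\gamma\geq 1$ forces $\lambda\geq 0$ is a detail the paper leaves implicit, and is worth noting.
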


\begin{proof}Corollary \ref{cor:ABn} shows that monic orthogonal polynomials $S_{n}(x;t)$ with respect to the weight $$\ww{x}=|x|^{2\la+1}\exp\{-v(x;t)\},$$ satisfy the differential-difference equation \eqref{eq:Snddeq}, where
\begin{subequations}\label{def:AnBn}
\begin{align}
A_n(x;t)&=\frac{x}{h_{n-1}}\int_{-\infty}^\infty \vvxy {S_n^2(y;t)}w(y;t)\,dy,\\
B_n(x;t)&=\frac{x}{h_{n-1}}\int_{-\infty}^\infty \vvxy {S_n(y;t)S_{n-1}(y;t)} w(y;t)\,dy 
+(\la+\tfrac12)[1+(-1)^n] .
\end{align}\end{subequations}
For the generalized Freud weight \eqref{genFreud}, 
using Lemma \ref{lem31} yields the result.\end{proof}

\subsection{The differential equation satisfied by generalized Freud polynomials}\label{sec:diffeqn}
Now we derive a differential equation satisfied by generalized Freud polynomials.
\begin{theorem}{\label{thm43}For the generalized Freud weight \eqref{genFreud}
the monic orthogonal polynomials $S_{n}(x;t)$ satisfy the differential equation
\begin{equation}\label{eq:Snode}
x\deriv[2]{S_n}{x}(x;t)+R_n(x;t)\deriv{S_n}{x}(x;t)+T_n(x;t)S_n(x;t)=0,
\end{equation}
where
\begin{subequations}\begin{align}
R_n(x;t) 
&=-4x^4+2tx^2+{2\la+1}-\frac{2x^2}{x^2-\tfrac12t+\b_n+\b_{n+1}},\\[5pt]
T_n(x;t) 
&= 4nx^3+16x\b_n(\b_n+\b_{n+1}-\tfrac12t)(\b_n+\b_{n-1}-\tfrac12t)\nonumber\\
&\qquad+4x[1+(2\la+1)(-1)^n]\b_n -\frac{8\b_n x^3+(2\la+1)[1-(-1)^n]x}{x^2-\tfrac12t+\b_n+\b_{n+1}}\nonumber\\
&\qquad+(2\la+1)[1-(-1)^n]x\left(t-\frac{1}{2x^2}\right).
\end{align}\end{subequations}
}\end{theorem}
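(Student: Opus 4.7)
The plan is a direct application of Theorem \ref{thm:gende} to the generalized Freud weight \eqref{genFreud}, which fits the hypotheses with $k=0$, $\ga = 2\la+1$, and the even, continuously differentiable function $\v(x;t) = x^4 - tx^2$. Note $\ga \geq 1$ is ensured by the standing assumption $\la > -1$. The explicit forms of $A_n(x;t)$ and $B_n(x;t)$ needed to instantiate \eqref{coef1}--\eqref{coef2} were already derived in \eqref{AnBn} of the preceding theorem, so the remaining work is algebraic simplification.

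First I would assemble the ingredients. One has $\v'(x;t) = 4x^3 - 2tx$, so $(x-k)\v'(x;t) = 4x^4 - 2tx^2$, and differentiating \eqref{AnBn} yields
\[\deriv{A_n}{x} = 4\b_n\bigl(3x^2 - \tfrac12 t + \b_n + \b_{n+1}\bigr), \qquad \deriv{B_n}{x} = 8x\b_n.\]
The ratio appearing in \eqref{coef1} simplifies cleanly since $A_n$ contains an overall factor of $x$:
\[\frac{x}{A_n}\deriv{A_n}{x} = \frac{3x^2 - \tfrac12 t + \b_n + \b_{n+1}}{x^2 - \tfrac12 t + \b_n + \b_{n+1}} = 1 + \frac{2x^2}{x^2 - \tfrac12 t + \b_n + \b_{n+1}},\]
and substituting into \eqref{coef1} yields the stated $R_n$ in one step.

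For $T_n$ the substitution is the same but the simplification is the heart of the calculation. The first term $A_n A_{n-1}/(x\b_{n-1})$ in \eqref{coef2} immediately contributes $16x\b_n\bigl(\b_n + \b_{n+1} - \tfrac12 t\bigr)\bigl(\b_n + \b_{n-1} - \tfrac12 t\bigr)$, matching the central product in the statement after the $\b_{n-1}$ cancellation. Its leading quintic $16x^5\b_n$ is cancelled by the $-4x^3\cdot 4x^2\b_n = -16x^5\b_n$ coming from $-B_n\v'(x;t)$, and the residual cubic in $x$ collapses to $4nx^3$ after invoking the discrete \PI\ equation \eqref{eq:dPI} in the form
\[8\b_n\bigl(\b_{n+1}+\b_n+\b_{n-1}-\tfrac12 t\bigr) = 2n + (2\la+1)\bigl[1-(-1)^n\bigr].\]
The remaining pieces $\deriv{B_n}{x}$, $\ga B_n/x$, $-B_n^2/x$, and $-(B_n/A_n)\deriv{A_n}{x}$ can be split according to whether they involve the parity-dependent constant term $(\la+\tfrac12)[1-(-1)^n]$ of $B_n$: the parity-independent contributions combine with residual terms of the product to produce $4x\bigl[1+(2\la+1)(-1)^n\bigr]\b_n$, the rational pieces from $(B_n/A_n)\deriv{A_n}{x}$ collapse to $\bigl(8\b_n x^3 + (2\la+1)[1-(-1)^n]x\bigr)/(x^2-\tfrac12 t + \b_n + \b_{n+1})$, and the parity-dependent pieces of $\ga B_n/x - B_n^2/x - B_n\v'$ assemble into $(2\la+1)[1-(-1)^n]x\bigl(t - 1/(2x^2)\bigr)$.

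The main obstacle is the bookkeeping in this final simplification: the algebra is elementary but voluminous, and one must carefully keep track of both types of terms arising from $B_n$ (those present for all $n$ and those present only for odd $n$) while applying \eqref{eq:dPI} exactly where needed to reduce a symmetric combination of $\b_{n-1},\b_n,\b_{n+1}$ to the integer $n$ and the $(-1)^n$ correction. The $R_n$ computation, by contrast, is effectively a single line.
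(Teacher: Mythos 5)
Your proposal is correct and follows exactly the paper's own route: the paper's proof consists precisely of invoking Theorem \ref{thm:gende} with $k=0$, $\ga=2\la+1$, $\v(x;t)=x^4-tx^2$ and the expressions \eqref{AnBn} for $A_n$ and $B_n$, leaving the algebra to the reader. Your sketch of that algebra (including the use of \dPI\ \eqref{eq:dPI} to reduce the cubic terms to $4nx^3$) checks out.
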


\begin{proof}
In Theorem \ref{thm:gende} we proved that the coefficients in the differential equation \eqref{eq:gende} satisfied by polynomials orthogonal with respect to the weight \[\ww{x}=|x-k|^{\ga}\exp\{-\v(x;t)\},\] are given by  \eqref{coef1} and \eqref{coef2}. For the generalized Freud weight \eqref{genFreud} we use \eqref{coef1} and \eqref{coef2} with $k=0$, $\ga=2\la+1$, $\v(x;t)=x^4-tx^2$, and $A_n$ and $B_n$ given by \eqref{AnBn} to obtain the stated result.
\end{proof}

\begin{remark}We note that if $\{P_n(x)\}_{n=0}^{\infty}$, is a sequence of \textit{classical} orthogonal polynomials (such as Hermite, Laguerre and Jacobi polynomials), then $P_n(x)$ satisfies second-order ordinary differential equation
\begin{equation}\label{eq:Pn}
\sigma(x)\deriv[2]{P_n}{x}+\tau(x)\deriv{P_n}{x}=\la_nP_n,
\end{equation}
where $\sigma(x)$ is a monic polynomial with deg$(\sigma)\leq2$, $\tau(x)$ is a polynomial with deg$(\tau)=1$, and $\la_n$ is a real number which depends on the degree of the polynomial solution, see Bochner \cite{refBochner}. For {classical} orthogonal polynomials, the polynomials $\sigma(x)$ and $\tau(x)$ are the same as in the associated Pearson equation \eqref{eq:Pearson}. In contrast the coefficients in second-order ordinary differential equation satisfied by the polynomials for the generalized Freud weight given in Theorem \ref{thm43} are not the same as the polynomials $\sigma(x)=x$ and $\tau(x)=-4x^4+2tx^2+2\la+1$ in the Pearson equation \eqref{eq:Pearson} 
satisfied by the generalized Freud polynomials since
\begin{equation}\nonumber
\dfrac{w'(x;t)}{w(x;t)} =\frac{\tau(x)-\sigma'(x)}{\sigma(x)}=-4x^{3}+2tx + \dfrac{2\la+1}{x}, \quad x\in \R\backslash \lbrace 0\rbrace.
\end{equation}\end{remark}

\subsection{An alternative method due to Shohat}\label{sho}
It is shown in \cite{Nevai} and  \cite{refShohat39} that the monic orthogonal polynomials $S_{n}(x;t)$ orthogonal with respect to the generalized Freud weight \eqref{genFreud} are quasi-orthogonal of order $m=5$ and hence we can write
\begin{align}\label{quasi}
x\deriv{S_n}{x}(x;t) &=  \sum_{k=n-4}^{n}c_{n,k} S_k(x;t),
\end{align}
where the coefficient $c_{n,k}$ is given by
\begin{align}\label{Ccoeff}
c_{n,k} &= \frac{1}{h_{k}}  \int_{-\infty}^{\infty}   x\deriv{S_n}{x}(x;t) S_k(x;t) \ww{x}  \,d{x},
\end{align}
for $n-4 \leq k\leq n$ and $h_k\neq 0$.

Integrating by parts, we obtain for $n-4 \leq k\leq n-1$,
\begin{align}\nonumber
 h_k c_{n,k} &=\Big[ xS_k(x;t) S_n(x;t) w(x;t)\Big]_{-\infty}^{\infty}  
 - \int_{-\infty}^\infty \deriv{}{x}\left[ x S_k (x;t) \ww{x} \right]S_n(x;t)  \,d{x}  \nonumber\\
&=- \int_{-\infty}^{\infty}   \left[S_n(x;t) S_k(x;t)  
  + xS_n(x;t) \deriv{S_k}{x}(x;t)\right] w(x;t) \,d{x}
   - \int_{-\infty}^{\infty}   xS_n(x;t) S_k(x;t) \deriv{w}{x}(x;t) \,d{x},
\nonumber\\
 &= \nonumber    - \int_{-\infty}^{\infty} {  xS_n(x;t) S_k(x;t)  \deriv{w}{x}(x;t)}\,d{x}\\
&=    -\int_{-\infty}^{\infty} {S_n(x;t) S_k(x;t) \left( -4x^{4} +2tx^2 +2\la +1\right) }   \ww{x}\,d{x}\nonumber
 \\&= 
  \int_{-\infty}^{\infty} \big(4x^4-2tx^2\big) S_n(x;t) S_k(x;t) \ww{x} \,d{x},\label{maineq}
 \end{align}
 since
 \[ x\deriv{w}{x}(x;t) =\big(-4x^4+2x^2 + 2\la+1\big)\ww{x}.\]
Iterating the three-term recurrence relation \eqref{eq:gfrr}, the following relations are obtained
\begin{subequations}\label{recurrence13}
\comment{\begin{align}
x^2 S_n(x;t) &= S_{n+2}(x;t) +( \b_{n} + \b_{n+1}) S_{n}(x;t) + \b_{n} \b_{n-1} S_{n-2}(x;t),\label{recurrence1}\\
x^4 S_n(x;t)& = S_{n+4}(x;t) + (  \b_{n}+ \b_{n+1} + \b_{n+2} +\b_{n+3})  S_{n+2}(x;t)
\nonumber\\& + [\b_{n} ( \b_{n-1} + \b_{n}  + \b_{n+1})    +\b_{n+1} ( \b_{n} + \b_{n+1}  + \b_{n+2} )]  S_{n}(x;t)
\nonumber\\& +\b_{n}  \b_{n-1}(  \b_{n-2} +\b_{n-1} + \b_{n}  + \b_{n+1} ) S_{n-2}(x;t)
 \nonumber \\&
+ ( \b_{n}  \b_{n-1} \b_{n-2} \b_{n-3} )  S_{n-4}(x;t).\label{recurrence3}
\end{align}}
\begin{align}
x^2 S_n = S_{n+2} &+( \b_{n} + \b_{n+1}) S_{n} + \b_{n} \b_{n-1} S_{n-2},\label{recurrence1}\\
x^4 S_n = S_{n+4} &+ (  \b_{n}+ \b_{n+1} + \b_{n+2} +\b_{n+3})  S_{n+2}
\nonumber\\& 
+ \big[\b_{n} ( \b_{n-1} + \b_{n}  + \b_{n+1})    +\b_{n+1} ( \b_{n} + \b_{n+1}  + \b_{n+2} )\big]  S_{n}
\nonumber\\& +\b_{n}  \b_{n-1}(  \b_{n-2} +\b_{n-1} + \b_{n}  + \b_{n+1} ) S_{n-2}
+ ( \b_{n}  \b_{n-1} \b_{n-2} \b_{n-3} )  S_{n-4}.\label{recurrence3}
\end{align}

\end{subequations}
Substituting \eqref{recurrence13} into \eqref{maineq} yields the coefficients 
$\lbrace c_{n,k} \rbrace_{k=n-4}^{n-1}$ in \eqref{quasi}.
\begin{subequations}\label{Aau}
\begin{align} 
c_{n,n-4} &  = 4 \b_{n}  \b_{n-1} \b_{n-2}  \b_{n-3},\\
c_{n,n-3} & =0,\\
c_{n,n-2} &= 4\b_{n} \b_{n-1} (\b_{n-2} +  \b_{n-1} + \b_{n}  + \b_{n+1}- \tfrac12t),\\
 c_{n,n-1} & =0.
 \end{align}
\end{subequations}
Lastly we consider the case when $k=n$. Integration by parts   in \eqref{Ccoeff} yields
\begin{align}
h_n c_{n,n}  &=\int_{-\infty}^{\infty}   x\deriv{S_n}{x}(x;t) S_n(x;t) \ww{x}  \,d{x}\nonumber\\
&=-\tfrac12\int_{-\infty}^{\infty}   S_n^2(x;t) \left[\ww{x} + x\deriv{w}{x}(x;t)\right] d{x}
\nonumber\\&= - \tfrac12h_n 
   +\int_{-\infty}^{\infty}   S_n^2(x;t)\big(2x^{4} -tx^2 -\la -\tfrac12\big)\,\ww{x} \,d{x}
 \nonumber\\&=  \int_{-\infty}^{\infty} \big(2x^4-tx^2\big) S_n^2(x;t)\,\ww{x} \,d{x} - (\la+1) h_{n}. \label{eq:cnn1}
 \end{align}
 From the three-term recurrence relation \eqref{eq:gfrr}, we have
 \begin{align*} x^2 S_n^2&=(S_{n+1}+\b_n S_{n-1})^2 \\
 &= S_{n+1}^2+2\b_nS_{n+1}S_{n-1}+\b_n^2S_{n-1}^2\\
 x^4 S_n^2&= x^2\big(S_{n+1}^2+2\b_nS_{n+1}S_{n-1}+\b_n^2S_{n-1}^2)\\
 &= x^2 S_{n+1}^2 +2\b_n(xS_{n+1})(xS_{n-1})+\b_n^2x^2S_{n-1}^2\\
 &= \big(S_{n+2}+\b_{n+1}S_{n}\big)^2 + 2\b_n\big(S_{n+2}+\b_{n+1}S_{n}\big)\big(S_{n}+\b_{n-1}S_{n-2}\big)
+\b_n^2\big(S_{n}+\b_{n-1}S_{n-2}\big) ^2\\ 
&= S_{n+2}^2+2(\b_{n+1}+\b_n)S_{n+2}S_{n}+(\b_{n+1}+\b_n)^2 S_{n}^2+2\b_n\b_{n-1}S_{n+2}S_{n-2}
\\ &\qquad\quad
+2\b_n\b_{n-1}(\b_n+\b_{n+1})S_{n}S_{n-2}+\b_n^2\b_{n-1}^2S_{n-2}^2
  \end{align*}
and so by orthogonality 
 \begin{align}
 \int_{-\infty}^{\infty}  x^2 S_n^2(x;t)\,\ww{x} \,d{x} \
&= h_{n+1}+\b_n^2 h_{n-1} = (\b_{n+1}+\b_{n})h_{n},\label{eq:cnn2} \\
 \int_{-\infty}^{\infty}  x^4 S_n^2(x;t)\ww{x} \,d{x} &= h_{n+2}+(\b_{n+1}+\b_n)^2h_n +\b_n^2\b_{n-1}^2h_{n-2}\nonumber\\
 &=  \b_{n+2}\b_{n+1}h_n+(\b_{n+1}+\b_n)^2h_n +\b_n\b_{n-1}h_n \nonumber\\
 &=\big[(\b_{n+1}+\b_{n}+\b_{n-1})\b_n 
 + (\b_{n+2}+\b_{n+1}+\b_n)\b_{n+1}\big]h_n\nonumber\\
 &=\tfrac12 \big[t(\b_{n+1}+\b_{n})+n+\la+1\big]h_{n},\label{eq:cnn3}
  \end{align}
using $h_{n+1}=\b_{n+1}h_n$ and \dPI\ \eqref{eq:dPI}. Hence from \eqref{eq:cnn1}, \eqref{eq:cnn2} and \eqref{eq:cnn3} we have
\begin{align} c_{n,n}&= t(\b_{n+1}+\b_{n})+n+\la+1-t(\b_{n+1}+\b_{n})- (\la+1)\nonumber\\
&=n. \label{eq:cnn4}\end{align}

Combining \eqref{Aau} with \eqref{quasi}, we write
 \begin{align}\label{AQquasi}
x\deriv{S_n}{x}(x;t) = c_{n,n-4} S_{n-4}(x;t) +  c_{n,n-2} S_{n-2}(x;t)  +  c_{n,n} S_{n}(x;t).
\end{align}
In order to express  $S_{n-4}$ and $S_{n-2}$ in \eqref{AQquasi} in terms of $S_n$ and $S_{n-1}$, we iterate \eqref{eq:gfrr}  to obtain
\begin{align}\label{recoa}
S_{n-2}&= \dfrac{xS_{n-1}-S_n}{\b_{n-1}}, \\
S_{n-3}&= \dfrac{xS_{n-2}-S_{n-1}}{\b_{n-2}} 
= \dfrac{x^2-\b_{n-1}}{\b_{n-1}\b_{n-2}}\, S_{n-1} - \dfrac{x}{\b_{n-1}\b_{n-2}}\, S_{n},\\ 
S_{n-4}
&= \dfrac{xS_{n-3} - S_{n-2}}{\b_{n-3}} 
= \label{recoab}   \dfrac{x^3-  ( \b_{n-1}+\b_{n-2})x}{\b_{n-1}\b_{n-2}\b_{n-3}}\,  S_{n-1} -
    \dfrac{x^2-\b_{n-2}}{ \b_{n-1}\b_{n-2}\b_{n-3}} \,  S_{n}.
   \end{align}
Substituting \eqref{Aau},  \eqref{eq:cnn4}, \eqref{recoa} and \eqref{recoab} into \eqref{AQquasi} yields
\begin{align}\label{Aomega}
x\deriv{S_n}{x}(x;t)= -B_n(x)  S_n(x;t) + A_n(x;t)  S_{n-1}(x;t),
\end{align}
where $A_n(x)$ and $B_n(x)$ are given by \eqref{AnBn}.
 
\section{Conclusion}\label{sec:concl}
In this paper, for the generalized Freud weight \eqref{genFreud} we have obtained explicit expressions for the coefficients of the three-term recurrence relation and differential-difference equation satisfied by generalized Freud polynomials. We also proved that the generalized Freud polynomials satisfy a linear ordinary differential equation. We note that the closed form expressions for the coefficients provided allow investigation of other properties, including properties of the zeros such as monotonicity, convexity and inequalities satisfied by the zeros. However, although the expressions for the coefficients given in this paper are explicit, they are rather complicated and given in terms of special function solutions of the fourth \p\ equation which does not necessarily lead to elegant results in applications. For this reason, a natural extension of this work would be an investigation of asymptotic properties using limiting relations satisfied by the polynomials as the parameters $t$ and/or $\la$ tend to $\infty$.

\bigskip
\section*{Acknowledgments}
We thank the London Mathematical Society for support through a ``Research in Pairs" grant. PAC thanks Alexander Its, Ana Loureiro, and Walter van Assche for their helpful comments and illuminating discussions.
KJ thanks the School of Mathematics, Statistics \& Actuarial Science at the University of Kent for their hospitality during her visit when some of this research was done. The research by KJ was partially supported by the National Research Foundation of South Africa.

\def\p{Painlev\'{e}}
\def\JPA{J. Phys. A}

\def\refjl#1#2#3#4#5#6#7{\vspace{-0.25cm}
\bibitem{#1}{\frenchspacing#2}, \textrm{#3},
\textit{\frenchspacing#4}, \textbf{#5} (#7) #6.}

\def\refjltoap#1#2#3#4#5#6#7{\vspace{-0.25cm}
\bibitem{#1} {\frenchspacing#2}, \textrm{#3},
\textit{\frenchspacing#4}, 
#6.}

\def\refbk#1#2#3#4#5{\vspace{-0.25cm}
\bibitem{#1} {\frenchspacing#2}, \textit{#3}, #4, #5.}

\def\refcf#1#2#3#4#5#6{\vspace{-0.25cm}
\bibitem{#1} {\frenchspacing#2}, \textrm{#3},
in \textit{#4}, {\frenchspacing#5}, #6.}

\def\and{\mbox{\rm and}\ }

\end{document}